\documentclass[11pt]{article}
\usepackage[english]{babel}
\usepackage{amssymb,amsmath,amsthm}
\textwidth=165truemm \textheight=240truemm \voffset-2.5truecm
\hoffset-2truecm \hfuzz17pt
\parindent=12pt

\newtheorem{theorem}{Theorem}[section]
\newtheorem{lemma}[theorem]{Lemma}
\newtheorem{proposition}[theorem]{Proposition}
\newtheorem{corollary}[theorem]{Corollary}

\newtheorem{question}[theorem]{Question}

{\theoremstyle{definition}}
{\theoremstyle{definition}\newtheorem{definition}[theorem]{Definition}}
{\theoremstyle{definition}\newtheorem{remark}[theorem]{Remark}}

\numberwithin{equation}{section}

\def\N{{\mathbb N}}

\def\K{{\mathbb K}}

\def\epsilon{\varepsilon}
\def\kappa{\varkappa}
\def\phi{\varphi}
\def\leq{\leqslant}
\def\geq{\geqslant}
\def\supp{\hbox{\tt supp}\,}
\def\dim{{\rm dim}\,}

\def\supp{\hbox{\tt supp}\,}

\title{Finite dimensional semigroup quadratic algebras\\ with the minimal number of relations}

\author{Natalia Iyudu and Stanislav Shkarin}

\date{}

\begin{document}

\maketitle

\begin{abstract}A quadratic semigroup algebra is an algebra over a field given by
the generators $x_1,\dots,x_n$ and a finite set of quadratic
relations each of which either has the shape $x_jx_k=0$ or the shape
$x_jx_k=x_lx_m$. We prove that a quadratic semigroup algebra given
by $n$ generators and $d\leq \frac{n^2+n}{4}$ relations is always
infinite dimensional. This strengthens the Golod--Shafarevich
estimate for the above class of algebras. Our main result however is
that for every $n$, there is a finite dimensional quadratic
semigroup algebra with $n$ generators and $\delta_n$ relations,
where $\delta_n$ is the first integer greater than
$\frac{n^2+n}{4}$. That is, the above Golod--Shafarevich-type estimate for semigroup algebras is sharp.
\end{abstract}

\small \noindent{\bf MSC:} \ \ 05A05, 17A45, 16S73, 16N40, 20M05

\noindent{\bf Keywords:} \ \ quadratic algebras, semigroup algebras,  word combinatorics, Golod--Shafarevich
theorem, Anick's conjecture, Hilbert series \normalsize

\section{Introduction \label{s1}}\rm

Throughout the paper, $\K$ is an arbitrary field and $\N$ is the
set of positive integers. For a set $X$, $\K\langle X\rangle$ stands
for the free associative algebra over $\K$ generated by $X$. For
$n\in\N$, $\langle X\rangle_n$ denotes the set of monomials in
$\K\langle X\rangle$ of degree $n$: $\langle
X\rangle_n=\{x_{i_1}\dots x_{i_n}: x_{i_j} \in X \}$.

We deal with {\it quadratic algebras}, that is, algebras $R$ given
as $\K\langle X\rangle/I$, where $I$ is the ideal in $\K\langle
X\rangle$ generated by a collection of homogeneous elements (called
relations) of degree $2$.

Algebras of this class, their growth, their Hilbert series, rationality and nil/nilpotency properties have been extensively studied (see \cite{popo, Ag, ufn} and references therein). One of the most challenging  questions in the area  (see the ICM paper \cite{Ag}, or \cite{z1}) is whether there exists an infinite dimensional nil algebra in this class. This is a version of the Kurosh problem with an additional constraint  formulated in terms of defining relations. A version of the Kurosh problem with a restriction on the rate of growth was recently solved in \cite{AgL}.
As it is well-known, the original Kurosh problem was solved with the help of  the Golod--Shafarevich lower estimate for the Hilbert series of an algebra. This shows that the behavior of the Hilbert series plays a crucial role in the structural theory of algebras. There is a vast literature devoted to the better understanding of the Golod--Shafarevich inequality. For instance, in \cite{Sch,cana} a number of results on asymptotic (in various senses) tightness of the Golod--Shafarevich inequality is obtained. In \cite{cana} we also prove some partial non-asymptotic results results on the Anick conjecture.

The Golod--Shafarevich-type estimates have very deep connections to other areas of algebra. First, it is necessary to mention their classical applications to $p$-groups and  class field theory \cite{gosh,goshp,z2}. As evidence of the connection to certain deep homological properties one can consider the fact that
for quadratic algebras with $n$ generators and $d$ relations, the interval $\frac{n^2}{4}< d < \frac{n^2}{2}$, where the Anick conjecture
on the tightness of the Golod--Shafarevich estimate is difficult to tackle, is exactly the interval, for which the generic algebra is non-Koszul (see \cite{popo}). Furthermore the notion of noncommutative complete intersection \cite{eti} can be defined in terms of the Hilbert series and refers to a class of algebras for which the Golod--Shafarevich inequality turns into equality. On the other hand, this concept is also characterized by the nice behavior of the noncommutative Koszul (Shafarevich) complex.

In this paper, we mostly deal with quadratic semigroup algebras, which traditionally serve as a main source of examples in this area and where an interesting combinatorics of words can applied. This class was extensively studied, one can find an account of main results and methods in \cite{J}.

The Golod--Shafarevich lower estimate \cite{gosh,popo} for the Hilbert series of an algebra given by quadratic relations implies \cite{Vinb} that $R$ is infinite dimensional if $d\leq \frac{n^2}{4}$, where $R$ is an algebra defined by $n$
generators and $d$ quadratic relations. Anick \cite{ani1,ani2}
conjectured that the Golod--Shafarevich estimate is attained in the
case of quadratic algebras. In particular, this conjecture, if true,
implies that for all $d,n\in\N$ with $d>\frac{n^2}{4}$, there is a
finite dimensional quadratic algebra with $n$ generators defined by
$d$ relations. This conjecture in its full generality still remains open.

We study the same question for a subclass of the class of quadratic
algebras.

\begin{definition}\label{semiq} A {\it quadratic semigroup algebra}
is an algebra given by generators and relations such that each of
its defining relations is either a degree 2 monomial or a difference
of two degree 2 monomials.
\end{definition}

Many partial solutions of Anick's conjecture were obtained by means of examples, which are quadratic semigroup algebras. An interesting combinatorial result dealing with semigroup relations was obtained by Wisliceny. We are moving in the same direction.
Wisliceny \cite{wis} proved that the conjecture of Anick is
asymptotically correct. Namely, for every $n\in\N$, he constructed a
quadratic algebra $R_n$ with $n$ generators defined by $d_n$
relations such that $R_n$ is finite dimensional and
$\lim\limits_{n\to\infty} \frac{d_n}{n^2}=\frac14$. More
specifically, $d_n=\frac{n^2+2n}{4}$ if $n$ is even and
$d_n=\frac{n^2+2n+1}{4}$ if $n$ is odd. The
algebras $R_n$, constructed by Wisliceny, are quadratic semigroup
algebras.

It turns out that it is not enough to consider quadratic semigroup algebras to prove  Anick's conjecture. Namely, the Golod--Shafarevich estimate can be
improved for this class.

\begin{theorem}\label{semi1} Let $R$ be a quadratic semigroup algebra
with $n$ generators given by $d\leq\frac{n^2+n}{4}$ relations. Then
$R$ is infinite dimensional.
\end{theorem}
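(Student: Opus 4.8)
The plan is to reduce the statement to an extremal fact about the $n^2$ monomials of degree $2$ and then to exhibit an explicit infinite linearly independent family of monomials in $R$.

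First I would normalise the presentation. We may assume the $d$ relations are linearly independent and again of semigroup type: otherwise replace them by a basis of the subspace $U\subseteq\langle X\rangle_2$ they span, and since $U$ is spanned by monomials and by differences of two monomials, such a basis can be chosen to consist of elements of these two forms, which only decreases the number of relations. The presentation is then encoded by a set $Z$ of \emph{dead} monomials (those lying in $U$, i.e.\ equal to $0$ in $R$), say $|Z|=z$, together with an equivalence relation $\sim$ on the set $W$ of the remaining $|W|=n^2-z$ monomials, whose classes $C_1,\dots,C_c$ form a basis of the degree-$2$ component $R_2$; in particular $c=n^2-d$.

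If $z=0$, every defining relation is a binomial $x_ix_j-x_kx_l$; such relations rewrite monomials to monomials and never produce $0$ (equivalently, $R$ is the contracted semigroup algebra of the monoid presented by the relations), so no monomial is $0$ in $R$, and the powers $x_1,x_1^2,x_1^3,\dots$, being nonzero and of pairwise distinct degrees, are linearly independent; hence $R$ is infinite dimensional. Assume from now on $z\geq1$. Let $s$ be the number of singleton classes among $C_1,\dots,C_c$. Counting the elements of $W$ according to their classes gives $|W|=\sum_i|C_i|\geq s+2(c-s)$, whence
\[
s\ \geq\ 2c-|W|\ =\ n^2-2d+z\ \geq\ n^2-\tfrac{n^2+n}{2}+z\ =\ \binom{n}{2}+z\ \geq\ \binom{n}{2}+1 .
\]
It is exactly this chain of inequalities that singles out the threshold $\frac{n^2+n}{4}$ in place of the Golod--Shafarevich value $\frac{n^2}{4}$.

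Finally I would argue by means of a directed graph $H$ on the vertex set $\{1,\dots,n\}$ (loops allowed), in which each singleton class $\{x_ix_j\}$ contributes the edge $i\to j$. Since $H$ has more than $\binom{n}{2}$ edges it cannot be acyclic, so it contains a directed cycle $v_1\to v_2\to\cdots\to v_k\to v_1$. Put $w=x_{v_1}x_{v_2}\cdots x_{v_k}$. For every $m\geq1$, each degree-$2$ factor of $w^m$ is one of the edges of this cycle, hence a singleton class of a nonzero monomial, so it is neither dead nor identified with any other degree-$2$ monomial; consequently no left- or right-hand side of a defining relation occurs in $w^m$, so $w^m$ represents a nonzero element of $R$. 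As the $w^m$ have pairwise distinct degrees they are linearly independent, and $R$ is infinite dimensional. The one genuinely delicate point — and the reason a cycle in the full graph of nonzero degree-$2$ monomials would not suffice — is that in a semigroup algebra a product of nonzero degree-$2$ monomials can itself be $0$; restricting to singleton classes is precisely what neutralises this, and the counting above is what guarantees that enough singleton classes are present to close a cycle.
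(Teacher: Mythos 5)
Your argument is correct, but its combinatorial core is genuinely different from the paper's. Both proofs rest on the same two mechanisms: a monomial none of whose degree-$2$ factors meets the support of any relation cannot lie in the ideal (you use this for the powers $w^m$ of your cycle word, the paper for $(ab)^m$ in its Lemma~\ref{SE}), and a presentation consisting only of binomials $u-v$ kills no monomial (your $z=0$ case via the semigroup-algebra/coefficient-sum observation, the paper's Lemma~\ref{SE1}). Where you diverge is the extremal step. The paper argues directly on the given relations: if some unordered pair $\{a,b\}$ has both $ab$ and $ba$ outside every support, it is done by a $2$-cycle; otherwise the union of the supports has at least $\frac{n^2+n}{2}$ elements, and since each support has at most two elements and $d\leq\frac{n^2+n}{4}$, the only surviving possibility is $d=\frac{n^2+n}{4}$ with every relation a binomial, which is handled by the coefficient-sum lemma. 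You instead normalise the presentation to a semigroup-type basis (dead monomials plus an equivalence relation on the live ones -- a true fact, though your proof leaves its verification implicit), count singleton classes to get more than $\binom{n}{2}$ of them whenever $z\geq1$, and invoke the bound that an acyclic digraph on $n$ vertices has at most $\binom{n}{2}$ edges to produce a directed cycle whose powers survive. The paper's route is shorter, needs only $2$-cycles, and isolates the boundary case $d=\frac{n^2+n}{4}$ explicitly; your route is somewhat longer but more structural: it handles linearly dependent relation sets cleanly, shows that any presentation containing a genuine monomial relation and at most $\frac{n^2+n}{4}$ independent relations leaves more than $\binom{n}{2}$ untouched degree-$2$ monomials, and exhibits an explicit non-nilpotent cycle word rather than only a word of period two.
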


However, this leaves a gap between the $\frac{n^2}{4}+\frac{n}{4}$
of Theorem~\ref{semi1} and approximately $\frac{n^2}{4}+\frac{n}{2}$ of the
Wisliceny example. Surprisingly, it turns out that the estimate
provided by Theorem~\ref{semi1} is tight.

\begin{theorem}\label{semi2} Let $d,n\in\N$ be such that
$d>\frac{n^2+n}{4}$. Then there exists a finite dimensional
quadratic semigroup algebra with $n$ generators given by $d$
relations.
\end{theorem}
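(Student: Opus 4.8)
\emph{Overall plan.} I would first reduce the statement to the single critical value $d=\delta_n$, where $\delta_n$ is the first integer greater than $\frac{n^2+n}{4}$, and then construct an explicit extremal example. For the reduction, suppose we have produced a finite dimensional quadratic semigroup algebra $A_n=\K\langle x_1,\dots,x_n\rangle/(\rho_n)$ whose defining set $\rho_n$ consists of exactly $\delta_n$ relations. Given any $d>\frac{n^2+n}{4}$, equivalently $d\geq\delta_n$, at most $\delta_n<n^2$ of the $n^2$ monomial relations $x_jx_k=0$ lie in $\rho_n$, so we may adjoin $d-\delta_n$ of the remaining ones to obtain a set $\rho$ of $d$ quadratic semigroup relations (if $d$ is so large that even this does not suffice, one also throws in difference relations $x_jx_k-x_lx_m$; for $n=1$ the only relevant case is $d=1$). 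Then $\K\langle x_1,\dots,x_n\rangle/(\rho)$ is a quotient of $A_n$, hence finite dimensional, and is presented by exactly $d$ quadratic semigroup relations. So everything comes down to constructing $A_n$.

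\emph{The construction.} For $A_n$ I would organise the $n$ generators into two interacting families and impose two blocks of relations: a block of monomial relations $x_jx_k=0$, and a block of identification (``shift'') relations $x_jx_k=x_{j'}x_{k'}$, the latter arranged so that each quadratic monomial surviving in degree $2$ is annihilated, from the left or from the right, after a bounded number of further multiplications. The guiding case is $n=2$: in $\K\langle x,y\rangle/(x^2-yx,\ y^2)$ one has $x^3=x^2\cdot x=yx\cdot x=y\cdot x^2=y\cdot yx=y^2x=0$ and also $x^3=x\cdot x^2=xyx$, whence $xyx=0$, and a short check then shows that every monomial of degree $4$ vanishes, so this algebra is $6$-dimensional while using exactly $\delta_2=2$ relations. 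For general $n$ the shift relations should be set up so that an analogous cascade of forced vanishings sweeps through the whole degree-$2$ component, and --- this is the essential point --- they must be counted so that the number of monomial relations together with the number of shift relations equals $\delta_n$ on the nose, leaving no Wisliceny-type margin. Since the arithmetic of $\delta_n$ depends on $n$ modulo $4$, I expect this to be carried out in a handful of cases with slightly different relation lists.

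\emph{Finite dimensionality and the main obstacle.} To verify that the constructed $A_n$ is finite dimensional I would refine its relations to a confluent, terminating rewriting system: orient each monomial relation as $x_jx_k\to0$, orient each shift relation as $x_jx_k\to x_{j'}x_{k'}$ in a direction for which the system terminates --- for instance one that decreases the number of occurrences of a distinguished generator, so that the rule need not shorten words --- then adjoin the finitely many extra rules forced by completion and check that every overlap ambiguity $x_ax_bx_c$ resolves, typically to $0$ in this extremal regime. The normal monomials are then finite in number, and in fact absent beyond some explicit degree $N_n$, which yields finite dimensionality; alternatively one argues by induction on degree, exhibiting for each homogeneous component an explicit monomial spanning set and showing it becomes empty past degree $N_n$. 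The main obstacle is precisely this last step together with the choice of relations: because the count is pinned exactly to $\delta_n$ there is no slack at all, so the cascade of forced vanishings has to be engineered very tightly, and checking that all overlap ambiguities collapse (equivalently, that the proposed monomial spanning sets are correct and eventually vanish) is the delicate combinatorial computation at the heart of the proof --- and it is exactly here that the sharpness of Theorem~\ref{semi1} becomes visible.
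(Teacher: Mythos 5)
Your overall plan is sound as far as it goes --- the reduction to the extremal case $d=\delta_n$ by adjoining extra monomial relations is fine (the paper treats it as implicit), your $n=2$ example is correct, and the paper too proves finite dimensionality by a normal-form argument on monomials. But the proposal stops exactly where the theorem begins: you never exhibit, for general $n$, a set of $\delta_n$ semigroup relations, nor any argument that the resulting algebra is nilpotent. Saying that the shift relations ``should be set up so that a cascade of forced vanishings sweeps through the whole degree-$2$ component'' and that you ``expect this to be carried out in a handful of cases'' restates the goal rather than meeting it; with the count pinned at $\delta_n$ there is, as you note yourself, no slack, and the entire content of the paper's Sections 3--5 is devoted to making such a cascade exist and proving it works. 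In particular, a case-by-case list indexed by $n\bmod 4$ with a direct confluence check does not scale: the relation sets must grow with $n$, the natural orientation of the rules (right-to-left lexicographic, as in the paper) is not length-reducing, and the nilpotency degree of the resulting algebras grows rapidly with $n$, so ``check that every overlap ambiguity resolves'' is not a finite computation you can wave at --- it is precisely the unproved heart of the matter.

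What the paper actually does is recursive rather than explicit-per-congruence-class: it works inside Wisliceny's class of QHS presentations, defines a finiteness certificate (``regularity'': absence of singular, i.e.\ non-tame, minimal monomials in some degree), shows regularity implies nilpotency (Lemma~\ref{regqhs}), and then gives an extension construction adding $4$ generators and $2n-3$ relations which preserves regularity (Theorem~\ref{ext}); starting from explicit base cases on $1,2,3,4$ generators this yields $\delta_n$-element regular systems for all $n$, and the relation count comes out right by an arithmetic-series computation. The technical core --- Lemmas~\ref{sing}, \ref{singp}, Corollary~\ref{npu}, and Lemmas~\ref{m1}--\ref{m4}, e.g.\ the fact that $x_1^q$ is congruent modulo $I$ to a monomial ending in $x_n$ --- is what replaces the confluence check you defer, and none of it (nor any substitute for it) appears in your proposal. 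So there is a genuine gap: the construction for general $n$ and its verification are missing, and without an inductive mechanism of this kind your plan does not yield the theorem.
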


Thus the minimal possible number of defining relations of a finite
dimensional quadratic semigroup algebra with $n$ generators is
precisely the first integer greater than $\frac{n^2+n}{4}$.
Theorems~\ref{semi1} and~\ref{semi2} give a solution
to the natural analog of Anick's conjecture for the class of semigroup
quadratic algebras. Furthermore, it roughly halves the gap between
the number of relations in the original Anick's conjecture and
the number of relations in Wisliceny's example, which
gave the best bound known before the result of this paper.

\section{Proof of Theorem~\ref{semi1}}

\begin{definition}\label{support} Let $g=\sum\limits_{a,b\in
X}\lambda_{a,b}ab$ be a homogeneous degree $2$ element of $\K\langle
X\rangle$. Then the finite set $\{ab:\lambda_{a,b}\neq 0\}\subseteq
\langle X\rangle_2$ is called {\it the support of} $g$ and is
denoted $\supp(g)$.
\end{definition}

\begin{lemma}\label{SE} Let $X$ be a non-empty set,
$M$ be a family of homogeneous degree $2$ elements of $\K\langle
X\rangle$ and $R=\K\langle X\rangle/I$, where $I$ is the ideal
generated by $M$. Assume also that there are $a,b\in X$ such that
$ab,ba\notin\supp(f)$ for every $f\in M$. Then $R$ is infinite
dimensional.
\end{lemma}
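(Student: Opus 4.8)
The plan is to build a surjective algebra homomorphism from $R$ onto a manifestly infinite dimensional algebra, keeping $a$ and $b$ as the only ``active'' generators. Suppose first that $a\neq b$, and let $A=\K\langle u,v\rangle/J$, where $J$ is the ideal generated by $u^2$ and $v^2$. The algebra $A$ is infinite dimensional: ordering the monomials in $u,v$ by degree and then lexicographically, the leading terms of the two relations are $u^2$ and $v^2$, all overlap ambiguities ($u^3$ and $v^3$) are resolvable, and so by the diamond lemma a $\K$-basis of $A$ consists of the monomials in $u,v$ containing neither $uu$ nor $vv$ as a factor; in particular the monomials $(uv)^k$, $k\in\N$, are linearly independent in $A$. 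Now define $\phi\colon\K\langle X\rangle\to A$ on the generators by $\phi(a)=u$, $\phi(b)=v$ and $\phi(x)=0$ for every $x\in X\setminus\{a,b\}$, and extend it to an algebra homomorphism.

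Next I would check that $\phi$ annihilates $I$, for which it suffices to show $\phi(f)=0$ for each $f\in M$. Write $f=\sum_{c,d\in X}\lambda_{c,d}\,cd$. Under $\phi$ every summand involving a generator outside $\{a,b\}$ dies, so $\phi(f)=\lambda_{a,a}u^2+\lambda_{a,b}uv+\lambda_{b,a}vu+\lambda_{b,b}v^2$. By hypothesis $ab,ba\notin\supp(f)$, hence $\lambda_{a,b}=\lambda_{b,a}=0$, while $u^2=v^2=0$ in $A$; therefore $\phi(f)=0$. Consequently $\phi(I)=0$, so $\phi$ descends to an algebra homomorphism $\bar\phi\colon R\to A$. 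Since $u$ and $v$ lie in its image, $\bar\phi$ is surjective, and as $A$ is infinite dimensional, so is $R$.

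It remains to dispose of the degenerate case $a=b$: here the hypothesis reads $a^2\notin\supp(f)$ for every $f\in M$, i.e. the coefficient of $aa$ in each $f$ vanishes, and the same argument with $A=\K[t]$, $\phi(a)=t$ and $\phi(x)=0$ for $x\neq a$, yields a surjection $R\to\K[t]$, so again $R$ is infinite dimensional. No genuine obstacle arises in this proof; the only points needing care are the claim that $\K\langle u,v\rangle/(u^2,v^2)$ is infinite dimensional (immediate from the normal-form description) and the bookkeeping showing that $\phi$ kills the defining relations — which is exactly where, and the only way in which, the assumption ``$ab,ba\notin\supp(f)$'' is used.
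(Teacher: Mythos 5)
Your proof is correct, but it follows a genuinely different route from the paper's. The paper argues directly on supports: since neither $ab$ nor $ba$ lies in $\supp(f)$ for any $f\in M$, no monomial of the form $(ab)^n$ can occur in any product $pfq$, hence $(ab)^n\notin I$ for all $n$, and these cosets already witness infinite dimensionality --- two lines, no auxiliary algebra, no normal-form theory. You instead kill all generators except $a,b$ and map $R$ onto $\K\langle u,v\rangle/(u^2,v^2)$ (resp.\ onto $\K[t]$ when $a=b$), checking via the support hypothesis that every defining relation dies, and then invoke the diamond lemma to see the target is infinite dimensional. Your argument is slightly heavier machinery for the same conclusion, but it buys a bit more: the image of the subalgebra generated by $a,b$ is the whole monomial algebra on two letters with square-zero generators, so \emph{all} alternating words in $a,b$ (not just the powers $(ab)^n$) stay linearly independent in $R$, and the bookkeeping of where the hypothesis enters is made completely explicit. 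Your separate treatment of the degenerate case $a=b$ is also welcome, since the lemma as stated does not exclude it (and the paper's application in Theorem~\ref{semi1} does involve the diagonal pairs); note, though, that the paper's own argument covers that case without modification, since $(aa)^n=a^{2n}$ likewise cannot occur in any $pfq$.
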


\begin{proof} Since $ab$ and $ba$ are not in the support of any
$f\in M$, the monomial $(ab)^n$ does not feature in the polynomial
$pfq$ for every $f\in M$ and $p,q\in \K\langle X\rangle$. Hence
$(ab)^n\notin I$ for each $n\in\N$. It immediately follows that $R$
is infinite dimensional.
\end{proof}

\begin{lemma}\label{SE1} Let $X$ be a non-empty set,
$M$ be a family of homogeneous degree $2$ elements of $\K\langle
X\rangle$ and $R=\K\langle X\rangle/I$, where $I$ is the ideal
generated by $M$. Assume also that for each $f\in M$, the
coefficients of $f$ sum up to $0$. Then $R$ is infinite dimensional.
\end{lemma}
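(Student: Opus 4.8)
The plan is to exhibit a surjective algebra homomorphism from $R$ onto some obviously infinite dimensional algebra; the cleanest target is the polynomial algebra $\K[t]$ in one variable. The key idea is that the hypothesis ``the coefficients of each $f\in M$ sum to $0$'' is precisely what is needed to kill all the defining relations under the map that collapses every generator to a single variable.

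Concretely, I would first invoke the universal property of the free algebra to define the algebra homomorphism $\phi\colon\K\langle X\rangle\to\K[t]$ by $\phi(x)=t$ for every $x\in X$ (here we use $X\neq\varnothing$, so $\K[t]$ really does receive the generator $t$). Next I would check that $\phi$ annihilates $M$: if $f=\sum_{a,b\in X}\lambda_{a,b}ab$ is homogeneous of degree $2$, then $\phi(f)=\bigl(\sum_{a,b}\lambda_{a,b}\bigr)t^2$, which is $0$ exactly when the coefficients of $f$ sum to $0$. Since $\phi$ is an algebra homomorphism and $I$ is the ideal generated by $M$, this gives $\phi(I)=0$, i.e. $I\subseteq\ker\phi$. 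Hence $\phi$ factors through the quotient as $\phi=\overline\phi\circ\pi$, where $\pi\colon\K\langle X\rangle\to R$ is the canonical projection and $\overline\phi\colon R\to\K[t]$ is an algebra homomorphism. The image of $\overline\phi$ is a subalgebra of $\K[t]$ containing $t=\overline\phi(\pi(x))$, hence equals $\K[t]$; so $\overline\phi$ is onto. As $\K[t]$ is infinite dimensional over $\K$, so is $R$.

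I do not expect any real obstacle here: the entire argument is a one-line observation once one hits on the ``collapse to one variable'' map, and the only points needing (trivial) care are that this map is well defined by freeness, that the sum-of-coefficients condition is exactly the condition for the relations to be sent to $0$, and that surjectivity survives passing to the quotient. If one preferred, one could equally well map onto the commutative quotient $\K[X]/(\overline M)$ where $\overline M$ is the image of $M$ in the polynomial ring $\K[X]$, but this needs the extra bookkeeping of why that quotient is infinite dimensional, so collapsing to $\K[t]$ is the more economical route.
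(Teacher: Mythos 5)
Your proof is correct, and it is a genuine (if closely related) variant of the paper's argument rather than a reproduction of it. The paper works with the sum-of-coefficients functional directly: since that functional is multiplicative, every element of the ideal $I$ has coefficient sum $0$, hence $I$ contains no monomials, and then infinite dimensionality follows because $I$ is a graded ideal, so every homogeneous component of $R$ is nonzero (this last step is left implicit). Your route packages the same multiplicativity into the homomorphism $\K\langle X\rangle\to\K[t]$ collapsing all generators to $t$; the homogeneity of the relations is exactly what makes $\phi(f)=\bigl(\sum_{a,b}\lambda_{a,b}\bigr)t^2=0$, and the induced surjection $R\to\K[t]$ then yields infinite dimensionality with no appeal to the grading of $R$. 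What each buys: your version is more self-contained, since the step ``$I$ contains no monomials, therefore $R$ is infinite dimensional'' requires the small extra observation about graded components which you bypass entirely; the paper's version, on the other hand, yields the slightly stronger and more structurally relevant fact that no monomial lies in $I$, which is the same kind of conclusion as in Lemma~\ref{SE} (where $(ab)^m\notin I$) and fits the semigroup-algebra theme of the paper. Both proofs are short and complete; yours has no gaps.
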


\begin{proof} Since the coefficients of each $f$ sum up to
$0$, it follows that the coefficients of every element of $I$ sum up
to $0$. Hence $I$ contains no monomials and therefore $R$ is
infinite dimensional.
\end{proof}

We are ready to prove Theorem~\ref{semi1}. Denote by $X$ the
$n$-element set of generators of $R$. If there are $a,b\in X$ such
that $ab$ and $ba$ are both not in the support of any relation, then
$R$ is infinite dimensional according to Lemma~\ref{SE}. Thus we can
assume that for every $a,b\in X$ either $ab$ or $ba$ (or both)
belongs to the union $N$ of the supports of the relations. It
follows that $|N|\geq\frac{n^2+n}{2}$. Since each relation is either
a monomial or a difference of two monomials, their supports have at
most two elements. Taking into account that we have
$d\leq\frac{n^2+n}{4}$ relations, we are left with the only
possibility that $d=\frac{n^2+n}{4}$ and each relation is a
difference of two monomials. In this case, the coefficients of each
relation sum up to 0. By Lemma~\ref{SE1}, $R$ is infinite
dimensional. The proof of Theorem~\ref{semi1} is complete.

\section{Quasierh\"ohungssysteme}

We find the algebras from Theorem~\ref{semi2} within the following
class considered by Wisliceny \cite{wis}.

\begin{definition}\label{qhs} Let $X$ be a finite set of generators
carrying a total ordering $<$. A set $M\subset \K\langle X\rangle$
of homogeneous elements of degree 2 is called a {\it Quasierh\"ohungssysteme  $($QHS\,$)$} on $X$ if
$$
\text{$\bigcup\limits_{g\in M}\supp(g)=\{ab:a,b\in X,\ a\geq b\}$
and the sets $\supp(g)$ are pairwise disjoint};
$$
and for every $g\in M$ one of the following three possibilities
holds:
\begin{align*}
&\text{either $g=ab$ with $a,b\in X$, $a\geq b$};
\\
&\text{or $g=ab-cd$ with $a,b,c,d\in X$, $a\geq b>c\geq d$};
\\
&\text{or $g=ab-cd$ with $a,b,c,d\in X$, $a>b=c>d$}.
\end{align*}
\end{definition}

Each QHS $M$ generates an ideal $J_M$ in $\K\langle
X\rangle$ and $R_M=\K\langle X\rangle/J_M$ is a
quadratic semigroup algebra. It is easy to see that the
cardinalities of QHSs on an $n$-element set $X$
range from the first integer $\delta_n$ greater than
$\frac{n^2+n}{4}$ to $\frac{n^2+n}{2}$. In the subsequent sections
we shall prove Theorem~\ref{semi2} by showing that there is a QHS
$M$ on an $n$-element $X$ of cardinality $\delta_n$ such that the
corresponding quadratic semigroup algebra $R_{M}$ is finite
dimensional.

This class of algebras was introduced by Wisliceny \cite{wis} and his example is from
the same class. He also provided an example of a QHS $M$ on a
$6$-element $X$ for which the corresponding algebra is infinite
dimensional. There is a criterion in \cite{wis} for a QHS to produce
a finite dimensional algebra, but although it can be handy for
treating specific examples with small number of generators with an
aid of a computer, it is nearly impossible to use when $X$ has a large number of
elements. Our proof does not make use of this criterion.

\begin{remark}\label{rem1} Let $X$ be a finite totally ordered set
and $M$ be a QHS on $X$. Let also $a$ be the minimal element of $X$
and $b$ be the maximal element of $X$. Form the definition of a
QHS it easily follows that the monomial $ba$ belongs
to $M$. We shall frequently deal with the set $M'=M\setminus \{ba\}$
and the ideal $I_M$ in $\K\langle X\rangle$ generated by $M'$.
\end{remark}

We will also use the following notation. If $I$ is an ideal in
$\K\langle X\rangle$ and $f,g\in \K\langle X\rangle$, the equality
$f=g\,(\bmod\,I)$ means that $f-g\in I$. Furthermore, if $X$ is
totally ordered and $m\in\N$, we shall {\bf always} endow the set of
monomials $\langle X\rangle_m$ of degree $m$ with the {\bf
right-to-left} lexicographical ordering. That is, $u_1\dots
u_m<v_1\dots v_m$ if there is $j\in\{1,\dots,m\}$ such that
$u_j<v_j$ and $u_l=v_l$ whenever $l>j$.

\begin{definition}\label{abab} Let $X$ be a finite totally ordered set,
$M$ be a QHS on $X$, $a=\min X$, $b=\max X$, $M'=M\setminus\{ba\}$
and $I_M$ be the ideal in $\K\langle X\rangle$ generated by $M'$. A
monomial $u=u_1\dots u_m\in\langle X\rangle_m$ is called {\it
minimal} (with respect to $M$) if $u\notin I_M$ and $u\leq v$
whenever $v\in\langle X\rangle_m$ and $u=v\,(\bmod\,I_M)$ (in other
words, $u$ is minimal in the right-to-left lexicographical
ordering among the degree $m$ monomials belonging to the coset
$u+I_M$).

We say that a minimal monomial $u\in\langle X\rangle_m$ is {\it
tame} if there is $v\in\langle X\rangle_m$ such that
$u=v\,(\bmod\,I_M)$ and there is $j\in\{1,\dots m\}$ such that
$v_j=b$ and $v_l=u_l$ for every $l>j$.

A non-tame minimal monomial will be called {\it singular}.

We say that the QHS $M$ is {\it regular} if there is
$m\in\N$ such that there are no singular monomials of degree $m$.
\end{definition}

\begin{remark}\label{rem2} It is easy to see that under the
assumptions of Definition~\ref{abab}, for any $v\in\langle
X\rangle_m$ such that $v\notin I_M$, there is a unique minimal
monomial $u\in\langle X\rangle_m$ satisfying $v=u\,(\bmod\,I_M)$.
Furthermore, if $v$ does not belong to the ideal $J_M$ in $\K\langle
X\rangle$ generated by $M$, then $u\notin J_M$ as well. The latter
happens because $I_M\subset J_M$.
\end{remark}

As usual, a {\it submonomial} of a monomial $u_1u_2\dots u_m$ is a
monomial of the shape $u_ju_{j+1}\dots u_k$, where $1\leq j\leq
k\leq m$. From the above definition it immediately follows that a
submonomial of a minimal monomial is minimal and that a minimal
monomial which has a tame submonomial is tame itself. This, in turn,
implies that a submonomial of a singular monomial is singular. These
observations are summarized in the next lemma.

\begin{lemma}\label{subm} Let $X$ be a finite totally ordered set
and $M$ be a QHS on $X$. Let also $u\in\langle X\rangle_m$  and $v$
be a submonomial of $u$. Then minimality of $u$ implies minimality
of $v$ and singularity of $u$ implies singularity of $v$.
\end{lemma}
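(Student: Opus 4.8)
The plan is to extract both assertions directly from the definitions, the only care needed being the bookkeeping of positions in the right-to-left lexicographical order. Write $u=u_1\cdots u_m$ and suppose $v=u_j\cdots u_k$ is the submonomial occupying positions $j,\dots,k$. The single algebraic input is that $I_M$ is a two-sided ideal: whenever $v'$ is a monomial of the same degree as $v$ with $v'\equiv v\ (\bmod\,I_M)$, the monomial $w:=u_1\cdots u_{j-1}\,v'\,u_{k+1}\cdots u_m$ satisfies $w\equiv u\ (\bmod\,I_M)$, because $u-w=u_1\cdots u_{j-1}(v-v')u_{k+1}\cdots u_m\in I_M$. I would use this substitution in all cases below.

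For minimality, assume $v$ is not minimal and argue by contradiction. If $v\in I_M$, then $u=u_1\cdots u_{j-1}\,v\,u_{k+1}\cdots u_m\in I_M$, contradicting $u\notin I_M$. Otherwise, using Remark~\ref{rem2} (or just unwinding the definition of minimality), pick a monomial $v'$ of the same degree with $v'\equiv v\ (\bmod\,I_M)$ and $v'<v$; if $i$ is the largest block position with $v'_i\neq v_i$, then $v'_i<v_i$ and $v'_l=v_l$ for $l>i$. Then the monomial $w$ above has degree $m$ and is congruent to $u$ modulo $I_M$, and reading from the right: positions past $k$ coincide with those of $u$, positions from $j+i$ to $k$ coincide since $v'$ and $v$ agree there, and at position $j-1+i$ we have $w_{j-1+i}=v'_i<v_i=u_{j-1+i}$. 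Hence $w<u$, contradicting minimality of $u$. So $v$ is minimal, which in particular makes tameness and singularity defined for it.

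For singularity I would instead prove the statement that a minimal $u$ having a \emph{tame} submonomial $v$ is itself tame; combined with the first part, this yields that every submonomial of a singular monomial is a non-tame minimal monomial, i.e.\ singular. So let $v$ be tame, witnessed by a monomial $v'$ of the same degree with $v'\equiv v\ (\bmod\,I_M)$, $v'_i=b$, and $v'_l=v_l$ for $l>i$. Taking the same $w$, one gets $w\equiv u\ (\bmod\,I_M)$ with $w_{j-1+i}=b$ and $w_l=u_l$ for all $l>j-1+i$ (positions past $k$ are untouched, and positions from $j+i$ to $k$ agree because $v'$ and $v$ do there); thus $w$ witnesses tameness of $u$.

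I do not anticipate a genuine obstacle: the content is precisely that being congruent modulo $I_M$ and the lexicographic comparison both localize to a contiguous block, together with the triviality that a suffix left untouched by the substitution cannot affect a right-to-left comparison. The only points to get right are the indexing — matching block position $i$ with the global position $j-1+i$ — and remembering that the tameness witness need not itself be a minimal monomial.
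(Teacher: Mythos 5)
Your proposal is correct and is exactly the argument the paper has in mind: the paper treats the lemma as immediate from the definitions, noting precisely your two observations (a submonomial of a minimal monomial is minimal, and a minimal monomial with a tame submonomial is tame, whence singularity is inherited), and your write-up just fills in the substitution-into-a-block and indexing details.
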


The following lemma shows the relevance of the above definition.

\begin{lemma}\label{regqhs} Let $X$ be a finite totally ordered set and
$M$ be a regular QHS on $X$. Then the corresponding quadratic
semigroup algebra $R_M$ is finite dimensional.
\end{lemma}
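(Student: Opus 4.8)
The plan is to show that $R_M$ is spanned by finitely many monomials. By Remark~\ref{rem2}, each monomial outside $I_M$ is congruent modulo $I_M$ to a unique minimal monomial, and since $I_M\subseteq J_M$ the algebra $R_M$ is spanned by the images of the minimal monomials. So it suffices to exhibit an integer $D$ for which every minimal monomial of degree $D$ lies in $J_M$: by Lemma~\ref{subm} (a submonomial of a minimal monomial is minimal) this forces every minimal monomial of degree $\geq D$ into $J_M$, and $R_M$ is then spanned by the finitely many minimal monomials of degree $<D$.

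The first step removes the maximal letter $b=\max X$. I claim that if $u=u_1\dots u_N$ is a minimal monomial and $u_i=b$ for some $i<N$, then $u_{i+1}=a$; since then $ba$ is a submonomial of $u$ and $ba\in M$ by Remark~\ref{rem1}, it follows that $u\in J_M$. To prove the claim, note that $bu_{i+1}$ is a non-increasing pair, so it lies in $\supp(g)$ for a unique $g\in M$. If $u_{i+1}\neq a$, then $g\neq ba$, so $g\in M'$; moreover a glance at the three forms in Definition~\ref{qhs} shows that the smaller of the two monomials in a difference-type relation never begins with $\max X$, so either $g$ is the monomial $bu_{i+1}$ or $g=bu_{i+1}-cd$. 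In the first case $u\in I_M$, contradicting minimality; in the second case, replacing $bu_{i+1}$ by $cd$ produces a monomial congruent to $u$ modulo $I_M$ whose letter in position $i+1$ is the last letter of $cd$, and this letter is strictly smaller than $u_{i+1}$ in either admissible form of $g$, so the new monomial lies strictly below $u$ in the right-to-left lexicographical order --- again contradicting minimality. Hence $u_{i+1}=a$. Together with Lemma~\ref{subm} this shows that a minimal monomial not in $J_M$ can carry $b$ only in its last position, and that deleting that letter (if it is $b$) produces a $b$-free minimal monomial of degree one less which is still not in $J_M$. It therefore suffices to bound the degrees of $b$-free minimal monomials that do not lie in $J_M$.

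Here regularity enters. Fix $m$ with no singular monomials of degree $m$; by Lemma~\ref{subm} there are then no singular monomials of any degree $\geq m$, so every minimal monomial of degree $\geq m$ is tame. Let $u$ be a $b$-free minimal monomial of degree $N\geq m$. Tameness provides $v\in\langle X\rangle_N$ and $j\leq N$ with $v=u\,(\bmod\,I_M)$, $v_j=b$ and $v_l=u_l$ for all $l>j$; as $u$ is $b$-free, position $j$ carries the rightmost occurrence of $b$ in $v$, and $v$ has at least one $b$ while its minimal representative $u$ has none. The remaining task --- and this is the main obstacle --- is to deduce $u\in J_M$.

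The idea for that last step is to run the $M'$-reduction on $v$: orient each difference-type relation as $pq\to rs$ with $pq$ the larger monomial, and reduce. Every step strictly lowers the right-to-left lexicographical order and preserves degree; no monomial relation is ever applied, since that would put $u$ into $I_M$; no occurrence of $b$ is ever created, since the smaller monomial of a difference-type relation never involves $\max X$; and an occurrence of $b$ can disappear only via a reduction whose leading monomial begins with $b$. The decisive point is that $ba$ is \emph{not} the leading monomial of any relation of $M'$ --- it is precisely the relation dropped when passing from $M$ to $M'$, cf.\ Remark~\ref{rem1} --- so an occurrence of $b$ immediately followed by $a$ can never be erased; if such a configuration appears at any stage of the reduction, the current monomial contains $ba$, hence lies in $J_M$, hence so does $u$. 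Since $u$ is $b$-free, every occurrence of $b$ in $v$ must be erased, so it remains to prove that, when $N\geq m$, this cannot be done without creating a $b$ followed by $a$ somewhere. I expect this to follow from a careful analysis of how the relations of $M'$ relocate and destroy letters, combined with the local structure of minimal monomials (each consecutive pair is increasing, equals $ba$, or is the smaller monomial of a difference-type relation) and with the fact, from Lemma~\ref{subm}, that every length-$m$ submonomial of $u$ is itself tame --- which prevents the reduction from coming to rest in a $b$-free monomial strictly above $u$. Granting this, $b$-free minimal monomials of degree $\geq m$ all lie in $J_M$, so by the earlier steps every minimal monomial of degree $>m$ lies in $J_M$, and $R_M$ is finite dimensional.
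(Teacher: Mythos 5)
Your opening reductions are fine: the observation that a minimal monomial carrying $b=\max X$ in a non-final position must in fact contain $ba$ (hence lies in $J_M$) is correct, and so is the remark that regularity plus Lemma~\ref{subm} makes every minimal monomial of degree at least $m$ tame. But the decisive step is missing, and you say so yourself: you never prove that a $b$-free minimal monomial $u$ of degree $N\geq m$ lies in $J_M$. The rewriting sketch does not close this. Tameness only hands you a congruent monomial $v$ with $v_j=b$ and $v_l=u_l$ for $l>j$; when $j<N$ the standard trichotomy (monomial relation --- including $ba\in M$ --- versus a difference relation whose lower term ends in a strictly smaller letter) does finish the job exactly as in your second paragraph, but when $j=N$ the letter $b$ sits in the \emph{last} position of $v$ and there is nothing to its right to reduce against, so your claim that ``every occurrence of $b$ must be erased, and erasing it forces a $ba$'' has no purchase. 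The assertion you defer (``this cannot be done without creating a $b$ followed by $a$'') is precisely the content that needs a proof, and nothing in the sketched bookkeeping of the reduction supplies it.

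The paper sidesteps this difficulty with a small but essential shift: instead of applying tameness to the monomial itself, take a minimal monomial $u=u_1\dots u_{m+1}$ of degree $m+1$ not in $J_M$ and apply tameness to its degree-$m$ prefix $u_1\dots u_m$ (minimal by Lemma~\ref{subm}, tame by regularity). Then the witness places $b$ at some position $j\leq m$, so the letter $u_{j+1}$ of $u$ always stands to its right, and the trichotomy you already use in your second paragraph finishes immediately: either $bu_{j+1}\in M$ (this covers $bu_{j+1}=ba$) and $u\in J_M$, a contradiction, or $bu_{j+1}-cd\in M'$ with $d<u_{j+1}$ and the replacement contradicts the minimality of $u$. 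This proves $\langle X\rangle_{m+1}\subset J_M$ outright, with no need for the $b$-free analysis or the rewriting argument. You have all the ingredients; the gap is that you applied tameness to the whole monomial rather than to a proper prefix, which is exactly the case your method cannot handle.
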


\begin{proof}Let $a=\min X$, $b=\max X$, $M'=M\setminus\{ab\}$,
$I_M$ and $J_M$ be the ideals in $\K\langle X\rangle$, generated by
$M'$ and $M$ respectively. Since $M$ is regular, there is $m\in\N$
such that there are no singular monomials of degree $m$. In order to
show that $R_M=\K\langle X\rangle/J_M$ is finite dimensional it
suffices to show that $R_M$ is $(m+1)$-step nilpotent, or
equivalently, that $\langle X\rangle_{m+1}\subset J_M$. Assume the
contrary. Then (see Remark~\ref{rem2}) there is a minimal monomial
$u=u_1\dots u_mu_{m+1}\in \langle X\rangle_{m+1}$, which does not
belong to $J_M$. By Lemma~\ref{subm}, the submonomial $\widetilde
u=u_1\dots u_m$ is also minimal. Since there are no singular
monomials of degree $m$, $\widetilde u$ is tame. That is, there are
$j\in\{1,\dots,m\}$ and $\widetilde v=v_1\dots v_m\in \langle
X\rangle_{m}$ such that $\widetilde u=\widetilde v\,(\bmod\,I_M)$,
$v_j=b$ and $v_l=u_l$ whenever $j<l\leq m$. If
$v_ju_{j+1}=bu_{j+1}\in M$, then
$$
u=v_1\dots v_{j-1}bu_{j+1}\dots u_{m+1}=0\,(\bmod\,J_M)
$$
and therefore $u\in J_M$, which is a contradiction. If
$bu_{j+1}\notin M$, then since $b=\max X$, the definition of a QHS
implies that there are $c,d\in X$ such that
$bu_{j+1}-cd\in M'$ and $u_{j+1}>d$. Hence
$$
u=v_1\dots v_{j-1}bu_{j+1}\dots u_{m+1}=v_1\dots
v_{j-1}cdu_{j+2}\dots u_{m+1}\,(\bmod\,I_M).
$$
Since the last monomial in the above display is less than $u$ (in
the right-to-left lexicographical ordering) we have obtained a
contradiction with the minimality of $u$.
\end{proof}

We also need the following property of singular monomials.

\begin{lemma}\label{sing} Let $X$ be a finite totally ordered set,
$a=\min X$, $b=\max X$, $M$ be a QHS on $X$, $M'=M\setminus\{ba\}$,
$I_M$ be the ideal in $\K\langle X\rangle$ generated by $M'$ and
$u=u_1\dots u_m\in\langle X\rangle_m$ be a singular monomial. Assume
also that $1\leq k\leq m$ and $v=v_1\dots v_k\in\langle X\rangle_k$
is such that $v=u_1\dots u_k\,(\bmod\,I_M)$. Then there exists
$w=w_1\dots w_m\in\langle X\rangle_m$ such that $w_m\geq v_k$ and
$u=w\,(\bmod\,I_M)$.
\end{lemma}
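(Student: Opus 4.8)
The plan is to isolate the following monotonicity property and then iterate it: \emph{if $u=u_1\dots u_m$ is singular with $m\geq2$ and $p=p_1\dots p_{m-1}\in\langle X\rangle_{m-1}$ satisfies $p=u_1\dots u_{m-1}\ (\bmod\,I_M)$, then there is $w=w_1\dots w_m\in\langle X\rangle_m$ with $w=u\ (\bmod\,I_M)$ and $w_m\geq p_{m-1}$}. Granting this, Lemma~\ref{sing} follows quickly: if $k=m$ one takes $w=v$; if $k<m$, then for each $j$ with $k\leq j<m$ the monomial $u_1\dots u_{j+1}$ is a submonomial of $u$, hence singular by Lemma~\ref{subm}, and it has length at least $2$, so feeding the property a degree-$j$ representative of the coset of $u_1\dots u_j$ whose last letter is $\geq v_k$ yields a degree-$(j+1)$ representative of the coset of $u_1\dots u_{j+1}$ whose last letter is $\geq v_k$. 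Since $v$ is itself such a representative when $j=k$ (its last letter being $v_k$), iterating from $j=k$ to $j=m-1$ produces the required $w$.

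To establish the monotonicity property, first note that $pu_m=u\ (\bmod\,I_M)$ because $I_M$ is an ideal. If $p_{m-1}\leq u_m$, take $w=pu_m$. Otherwise $p_{m-1}>u_m$, so the pair $p_{m-1}u_m$ lies in $\bigcup_{g\in M}\supp(g)$, hence in $\supp(g)$ for a unique $g\in M$, and one analyses $g$. First, $p_{m-1}\neq\max X$: otherwise $pu_m$ would be a representative of the coset of $u$ whose letter in position $m-1$ equals $\max X$ and which agrees with $u$ in position $m$, making $u$ tame by Definition~\ref{abab}, contrary to singularity. In particular $p_{m-1}u_m\neq ba$, so $g$ is not a monomial relation, since otherwise $g=p_{m-1}u_m\in M'$ would give $pu_m\in I_M$, hence $u\in I_M$, contradicting minimality. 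Thus $g=\alpha\beta-\gamma\delta$ with $\alpha\geq\beta>\gamma\geq\delta$ or $\alpha>\beta=\gamma>\delta$, and $p_{m-1}u_m$ is $\alpha\beta$ or $\gamma\delta$. If $p_{m-1}u_m=\alpha\beta$, then $p_1\dots p_{m-2}\gamma\delta$ represents the coset of $u$ and ends in $\delta<\beta=u_m$, so it is strictly smaller than $u$ in the right-to-left lexicographic order, contradicting minimality of $u$. Hence $p_{m-1}u_m=\gamma\delta$, and $w=p_1\dots p_{m-2}\alpha\beta$ represents the coset of $u$ with $w_m=\beta\geq\gamma=p_{m-1}$ by the inequalities for $g$, which is what is needed.

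The hard part is the dichotomy near the end of the second paragraph. If the two trailing letters of a representative of the coset of $u$ form the \emph{larger} monomial of a binomial relation of the QHS, then reducing there pushes the last letter strictly below $u_m$, which is incompatible with $u$ being minimal; excluding that branch is exactly what forces the complementary branch, in which the reduction runs the other way and the last letter does not fall below $p_{m-1}$. The three properties bundled into ``singular'' enter in complementary ways: $u\notin I_M$ (a consequence of minimality) excludes the monomial-relation case, non-tameness of $u$ excludes the occurrence of $\max X$ in position $m-1$, and minimality of $u$ excludes the ``larger monomial'' branch.
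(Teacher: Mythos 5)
Your proposal is correct and follows essentially the same route as the paper: the paper runs an induction on $m-k$ whose inductive step is exactly your one-letter ``monotonicity'' extension, with the identical trichotomy (monomial relation excluded by minimality, larger side of a binomial excluded by right-to-left lexicographic minimality, smaller side yielding a last letter that does not decrease) and the same use of non-tameness to rule out $\max X$ in the relevant position. Packaging the inductive step as a standalone one-step lemma and iterating it from $j=k$ to $m-1$ is only an organizational difference, not a different argument.
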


\begin{proof} We shall use induction by $m-k$. If $m-k=0$, then
$m=k$ and $w=v$ satisfies all desired conditions. Thus we have our
basis of induction. Assume now that $n\in\N$ and that the statement
of the lemma holds whenever $m-k<n$. We have to prove it the case
$m-k=n$.

First, consider the case $u_{k+1}\geq v_k$. Since $v_1\dots
v_ku_{k+1}=u_1\dots u_ku_{k+1}\,(\bmod\,I_M)$ and the difference
between the degrees of $u$ and $v_1\dots v_{k}u_{k+1}$ is $n-1$, the
induction hypothesis provides us with $w=w_1\dots w_m\in\langle
X\rangle_m$ such that $w_m\geq u_{k+1}$ and $u=w\,(\bmod\,I_M)$.
Since $u_{k+1}\geq v_k$, we have $w_m\geq v_k$, as required.

It remains to consider the case $u_{k+1}<v_k$. Observe that $v_k\neq
b$. Indeed, otherwise the equality
\begin{equation}\label{equ2}
v_1\dots v_{k-1}v_ku_{k+1}\dots u_m=u\,(\bmod\,I_M)
\end{equation}
implies that $u$ is tame. Since $u_{k+1}<v_k$, the degree 2 monomial
$v_ku_{k+1}$ features in exactly one of the supports of the elements
of $M$ (see the definition of a QHS). Since $v_k\neq b$,
$v_ku_{k+1}\neq ba$ and therefore there is exactly one $g\in M'$
such that $v_ku_{k+1}\in\supp(g)$. There are three possibilities:
either $g=v_ku_{k+1}$ or $g=v_ku_{k+1}-cd$ with $d<u_{k+1}$ or
$g=cd-v_ku_{k+1}$ with $d\geq v_k$. If $g=v_ku_{k+1}$, we have
$v_ku_{k+1}\in I_M$ and therefore (\ref{equ2}) implies that $u\in
I_M$, which contradicts the minimality of $u$. Hence
$g=v_ku_{k+1}-cd$ with $d<u_{k+1}$ or $g=cd-v_ku_{k+1}$ with $d\geq
v_k$. In either case $v_ku_{k+1}=cd\,(\bmod\,I_M)$ and (\ref{equ2})
implies that
\begin{equation}\label{equ3}
u=v_1\dots v_{k-1}cdu_{k+2}\dots u_m\,(\bmod\,I_M).
\end{equation}
In the case $d<u_{k+1}$ the monomial in the right-hand-side of
(\ref{equ3}) is less than $u$, which contradicts the minimality of
$u$. It remains to consider the case $d\geq v_k$. Since the
difference between the degrees of $u$ and $v_1\dots v_{k-1}cd$ is
$n-1$, by the induction hypothesis, there is $w=w_1\dots
w_m\in\langle X\rangle_m$ such that $w_m\geq d$ and
$u=w\,(\bmod\,I_M)$. Since $d\geq v_k$, we have $w_m\geq v_k$, as
required.
\end{proof}

We also need a slight improvement of Lemma~\ref{sing} in a
particular case.

\begin{definition}\label{pure} Let $X$ be a finite totally ordered
set and $M$ be a QHS on $X$. We say that $d\in X$ is {\it pure} if
$a\geq b>c\geq d$ whenever $ab-cd\in M$.
\end{definition}

\begin{lemma}\label{singp} Let $X$ be a finite totally ordered set,
$a=\min X$, $b=\max X$, $M$ be a QHS on $X$, $M'=M\setminus\{ba\}$,
$I_M$ be the ideal in $\K\langle X\rangle$ generated by $M'$ and
$u=u_1\dots u_m\in\langle X\rangle_m$ be a singular monomial such
that $u_m$ is pure. Assume also that $1\leq k<m$ and $v=v_1\dots
v_k\in\langle X\rangle_k$ is such that $v=u_1\dots
u_k\,(\bmod\,I_M)$. Then there exists $w=w_1\dots w_m\in\langle
X\rangle_m$ such that $w_m>v_k$ and $u=w\,(\bmod\,I_M)$.
\end{lemma}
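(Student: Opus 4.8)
The plan is to follow the proof of Lemma~\ref{sing} almost verbatim, again by induction on $m-k$, but with the base of the induction now at $m-k=1$ (we are given $k<m$) and with one extra use of the hypothesis that $u_m$ is pure. The structural point that makes the stronger conclusion available is that, since $k<m$, every auxiliary word built in the course of the argument has length $k+1\leq m-1<m$, so at each recursive step we may invoke the strengthened statement itself rather than merely Lemma~\ref{sing}. As in the proof of Lemma~\ref{sing}, we use repeatedly that if some $v'\in\langle X\rangle_m$ with $v'\equiv u\ (\bmod\,I_M)$ has $v'_j=b$ for some $j$ with $v'_l=u_l$ for all $l>j$, then $u$ is tame, contrary to hypothesis; throughout the induction $u$ is never changed, so it remains singular with $u_m$ pure, and only $v$ and its length vary.

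For the base case $k=m-1$: if $u_m>v_{m-1}$, the word $w=v_1\dots v_{m-1}u_m$ works. Otherwise $u_m\leq v_{m-1}$, and we look at $v_1\dots v_{m-1}u_m\equiv u\ (\bmod\,I_M)$. Here $v_{m-1}\neq b$ (else this word exhibits $u$ as tame), so $v_{m-1}u_m\neq ba$ and there is a unique $g\in M'$ with $v_{m-1}u_m\in\supp(g)$. If $g$ is the monomial $v_{m-1}u_m$ then $u\in I_M$, which is impossible. If $v_{m-1}u_m$ is the larger of the two monomials of a binomial $g=v_{m-1}u_m-cd$, then the admissible QHS shapes force $d<u_m$, so $v_1\dots v_{m-2}cd\equiv u$ is smaller than $u$ in the right-to-left order, contradicting minimality of $u$. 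Hence $g=ab-v_{m-1}u_m$ for some $a,b$, and here purity of $u_m$ enters: it rules out the shape $a>b=v_{m-1}>u_m$ and leaves the shape $a\geq b>v_{m-1}\geq u_m$, so $b>v_{m-1}$ and $w=v_1\dots v_{m-2}ab$ is as required.

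For the induction step $m-k\geq 2$ (so $k+1<m$): if $u_{k+1}\geq v_k$, apply the inductive hypothesis to $v_1\dots v_ku_{k+1}$, which has length $k+1<m$ and is congruent modulo $I_M$ to $u_1\dots u_{k+1}$, obtaining $w$ with $w_m>u_{k+1}\geq v_k$. If $u_{k+1}<v_k$, then $v_k\neq b$ (same tameness argument), so $v_ku_{k+1}\neq ba$ and $v_ku_{k+1}$ lies in a unique $\supp(g)$ with $g\in M'$; the possibilities $g=v_ku_{k+1}$ and $g=v_ku_{k+1}-cd$ are excluded exactly as in Lemma~\ref{sing}, giving $u\in I_M$ and, respectively, a representative of $u+I_M$ smaller than $u$. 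So $g=ab-v_ku_{k+1}$ and $u\equiv v_1\dots v_{k-1}ab\,u_{k+2}\dots u_m\ (\bmod\,I_M)$; applying the inductive hypothesis to $v_1\dots v_{k-1}ab$ (length $k+1<m$, congruent modulo $I_M$ to $u_1\dots u_{k+1}$) gives $w$ with $w_m>b$, and since the admissible shapes of $g$ give $b>v_k$ in the first and $b=v_k$ in the second, we get $w_m>v_k$ in either case.

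The only genuinely new ingredient, relative to Lemma~\ref{sing}, is this appeal to purity of $u_m$ in the base case, which eliminates a QHS element of the shape $a\,v_{m-1}-v_{m-1}u_m$: this is precisely the configuration that would yield a representative of $u+I_M$ ending in $v_{m-1}$ and in nothing smaller, i.e. the sole obstruction to upgrading the estimate $w_m\geq v_k$ of Lemma~\ref{sing} to $w_m>v_k$. I expect the bulk of the work, as already in Lemma~\ref{sing}, to lie in the careful case analysis of the unique $g\in M'$ that carries the relevant degree-$2$ monomial, together with the routine checks that each auxiliary word is congruent modulo $I_M$ to the appropriate prefix $u_1\dots u_{k+1}$ of $u$; none of this is a serious obstacle once Lemma~\ref{sing} is available.
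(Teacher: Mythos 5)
Your argument is correct: every case closes, the induction on $m-k$ with base $m-k=1$ is well-founded (the strict conclusion genuinely fails at $m=k$, so starting at $1$ is the right choice), the strengthened inductive hypothesis is legitimately applicable in the step because $k+1<m$ there, and purity of $u_m$ is used exactly where it must be used, namely to exclude the shape $a>b=c>d$ for a relation whose smaller monomial is $v_{m-1}u_m$, which is indeed the only obstruction to upgrading $w_m\geq v_k$ to $w_m>v_k$.

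Structurally, though, you take a different route from the paper. The paper does not re-run the induction: it first disposes of the case $u_m>v_k$ by taking $w=u$, then uses Lemma~\ref{subm} to note that the prefix $u_1\dots u_{m-1}$ is itself singular, applies Lemma~\ref{sing} \emph{once} to that prefix to produce $y=y_1\dots y_{m-1}$ with $y_{m-1}\geq v_k$ and $u=y_1\dots y_{m-1}u_m\,(\bmod\,I_M)$, and then performs a single junction analysis of $y_{m-1}u_m$ (monomial relation, relation with $y_{m-1}u_m$ as larger monomial, relation with it as smaller monomial), invoking purity of $u_m$ in the last case to get $d>y_{m-1}\geq v_k$. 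So the paper's proof is essentially your base case, executed after one black-box application of Lemma~\ref{sing}; your version instead carries a strengthened statement through the whole induction of Lemma~\ref{sing}, never using Lemma~\ref{subm} or the conclusion of Lemma~\ref{sing} itself. What the paper's organization buys is brevity and reuse; what yours buys is a self-contained argument in which the improvement is visible at every step rather than only at the final letter. One cosmetic point: you recycle the letters $a,b$ for the generic monomial of a relation, while in the statement $a=\min X$ and $b=\max X$; in a written-up version you should rename these (the paper uses $c,d$) to avoid the clash, in particular in the clause ``$w_m>b$''.
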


\begin{proof} If $u_m>v_k$, we can just take $w=u$. Thus we can
assume that $v_k\geq u_m$. By Lemma~\ref{subm}, $u_1\dots u_{m-1}$
is singular and therefore Lemma~\ref{sing} provides $y=y_1\dots
y_{m-1}\in \langle X\rangle_{m-1}$ such that $y=u_1\dots
u_{m-1}\,(\bmod\,I_M)$ and $y_{m-1}\geq v_k$. Clearly,
\begin{equation}\label{E1}
u=y_1\dots y_{m-2}y_{m-1}u_m\,(\bmod\, I_M).
\end{equation}
Since $y_{m-1}\geq v_k\geq u_m$, the monomial $y_{m-1}u_m$ features
in one of the elements of $M$. Next, $y_{m-1}\neq b$ (otherwise
(\ref{E1}) implies that $u$ is tame). Hence $y_{m-1}u_m\neq ba$ and
therefore $y_{m-1}u_m$ belongs to the support of some $g\in M'$.
Taking into account that $u_m$ is pure, we have three possibilities:
either $g=y_{m-1}u_m$ or $g=y_{m-1}u_m-cd$ with $d<u_{m}$ or
$g=cd-y_{m-1}u_m$ with $d>y_{m-1}$.

If $g=y_{m-1}u_m$, $y_{m-1}u_m\in I_M$ and therefore (\ref{E1})
implies that $u\in I_M$, which contradicts the minimality of $u$.
Hence $g=y_{m-1}u_m-cd$ with $d<u_{m}$ or $g=cd-y_{m-1}u_m$ with
$d>y_{m-1}$. In either case $y_{m-1}u_m=cd\,(\bmod\,I_M)$ and
(\ref{E1}) gives
\begin{equation}\label{E2}
u=y_1\dots y_{m-2}cd\,(\bmod\,I_M).
\end{equation}
In the case $d<u_{k+1}$ the monomial in the right-hand-side of
(\ref{E2}) is less than $u$, which contradicts the minimality of
$u$. It remains to consider the case $d>y_{m-1}\geq v_k$. In this
case $w=y_1\dots y_{m-2}cd$ satisfies all desired conditions.
\end{proof}

\begin{corollary}\label{npu} Let $X$ be an $n$-element totally
ordered set, $a=\min X$, $b=\max X$, $M$ be a QHS on $X$,
$M'=M\setminus\{ba\}$, $I_M$ be the ideal in $\K\langle X\rangle$
generated by $M'$ and $u=u_1\dots u_m\in\langle X\rangle_m$ be a
singular monomial. Then the set $\{j:\text{$u_j$ is pure}\}$ has at
most $n-1$ elements.
\end{corollary}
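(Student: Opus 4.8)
The plan is to apply Lemma~\ref{singp} repeatedly along the nested prefixes of $u$ whose last letter is pure, producing a strictly increasing chain of letters, none of which can equal $b=\max X$ because singular monomials are non-tame.

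First I would let $S=\{j:\text{$u_j$ is pure}\}$ and write $S=\{j_1<j_2<\dots<j_t\}$; the goal is $t\leq n-1$. For each $i$ the prefix $u_1\dots u_{j_i}$ is a submonomial of $u$, hence singular by Lemma~\ref{subm}, and its last letter $u_{j_i}$ is pure. I would then build, by induction on $i$, monomials $w^{(i)}=w^{(i)}_1\dots w^{(i)}_{j_i}\in\langle X\rangle_{j_i}$ with $w^{(i)}=u_1\dots u_{j_i}\,(\bmod\,I_M)$ whose last letters $c_i:=w^{(i)}_{j_i}$ are strictly increasing. The base case is $w^{(1)}=u_1\dots u_{j_1}$. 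For the step, apply Lemma~\ref{singp} to the singular monomial $u_1\dots u_{j_{i+1}}$ (whose last letter $u_{j_{i+1}}$ is pure) with $k=j_i<j_{i+1}$ and with the monomial $v=w^{(i)}$, which satisfies $v=u_1\dots u_{j_i}\,(\bmod\,I_M)$ by construction; this yields $w^{(i+1)}=w^{(i+1)}_1\dots w^{(i+1)}_{j_{i+1}}$ with $w^{(i+1)}=u_1\dots u_{j_{i+1}}\,(\bmod\,I_M)$ and $w^{(i+1)}_{j_{i+1}}>w^{(i)}_{j_i}$, i.e. $c_{i+1}>c_i$. Thus $c_1<c_2<\dots<c_t$.

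Next I would observe that $c_i\neq b$ for every $i$. Indeed $w^{(i)}$ is congruent mod $I_M$ to the singular, hence non-tame, monomial $u_1\dots u_{j_i}$; if its last letter were $b$, then $w^{(i)}$ itself would witness tameness of $u_1\dots u_{j_i}$ (taking $j=j_i$ in Definition~\ref{abab}, the condition ``$v_l=u_l$ for $l>j_i$'' being vacuous), a contradiction. Hence $c_1<c_2<\dots<c_t$ are $t$ pairwise distinct elements of $X\setminus\{b\}$, so $t\leq n-1$, as required.

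The only real content here is the chaining in the inductive step — feeding the representative $w^{(i)}$ obtained at stage $i$ back into Lemma~\ref{singp} at stage $i+1$ as its input data $v$ — together with the remark that a representative of a singular monomial can never end in $b$; once these are in place the bound $n-1$ follows by simply counting distinct letters. I do not anticipate a genuine obstacle beyond verifying at each step the hypotheses $k<j_{i+1}$ and $v=u_1\dots u_k\,(\bmod\,I_M)$ of Lemma~\ref{singp}, which hold because the $j_i$ are strictly increasing and $w^{(i)}$ was constructed to represent $u_1\dots u_{j_i}$.
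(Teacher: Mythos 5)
Your proposal is correct and takes essentially the same route as the paper: it iterates Lemma~\ref{singp} along the pure positions to build a strictly increasing chain of last letters, which caps the number of pure positions by the size of $X$. The only cosmetic difference is the conclusion --- the paper assumes $n$ pure positions, forces the top of the chain to be $b=\max X$ and derives tameness of $u$ itself for a contradiction, while you exclude $b$ at every stage via non-tameness of the singular prefixes and count directly; both are valid.
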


\begin{proof}Assume the contrary. Then we can choose
$j_1,\dots,j_n\in\N$ such that $1\leq j_1<{\dots}<j_n\leq n$ and
each $u_{j_k}$ is pure. Let $a_1=u_{j_1}$ and take $y^{(1)}=u_1\dots
u_{j_1-1}$. Clearly $u_1\dots u_{j_1}=y^{(1)}a_1$ and therefore
$u_1\dots u_{j_1}=y^{(1)}a_1\,(\bmod\,I_M)$. By Lemma~\ref{singp},
there are $a_2>a_1$ and a monomial $y^{(2)}\in \langle
X\rangle_{j_2-1}$ such that $u_1\dots
u_{j_2}=y^{(2)}a_2\,(\bmod\,I_M)$. Applying Lemma~\ref{singp} again,
we see that there are $a_3>a_2$ and a monomial $y^{(3)}\in \langle
X\rangle_{j_3-1}$ such that $u_1\dots
u_{j_3}=y^{(3)}a_3\,(\bmod\,I_M)$. Proceeding this way we obtain
$a_1,\dots,a_n\in X$ and the monomials $y^{(k)}\in \langle
X\rangle_{j_k-1}$ such that $u_1\dots
u_{j_k}=y^{(k)}a_k\,(\bmod\,I_M)$ for $1\leq k\leq n$ and, most
importantly, $a_1<{\dots}<a_n$. Since $X$ has $n$ elements, the
latter can only happen if $a_n=b$. Then the equality $u_1\dots
u_{j_n}=y^{(n)}a_n\,(\bmod\,I_M)$ implies that
$u=y^{(n)}bu_{j_n+1}\dots u_m\,(\bmod\,I_M)$, which means that $u$
is tame. This contradiction completes the proof.
\end{proof}

\section{The extension construction for a Quasierh\"ohungssysteme}

Throughout this section $n\geq 5$ is an integer,
$X=\{x_1,\dots,x_n\}$ is an $n$-element set with the total ordering
$x_1<x_2<{\dots}<x_n$ and the $(n-4)$-element set
$X_0=X\setminus\{x_1,x_2,x_{n-1},x_n\}=\{x_3,\dots,x_{n-2}\}$
carries the total ordering inherited from $X$. Let also $M_0$ be a
QHS on $X_0$. By Remark~\ref{rem1}, $x_{n-2}x_3\in
M_0$. As usual, $M'_0=M_0\setminus\{x_{n-2}x_3\}$.

We consider $M,M'\subset\K\langle X\rangle$ defined in the following
way:
\begin{align*}
M'&=M'_0\cup \{x_nx_j-x_jx_1:2\leq j\leq n-2\}
\cup\{x_{n-1}x_{j+1}-x_jx_2:2\leq j\leq n-3\}
\\
&\quad\cup\{x_nx_n-x_{n-1}x_1,\,x_nx_{n-1}-x_{n-2}x_2,\,x_{n-1}x_{n-1}-
x_{n-2}x_3,\,x_{n-1}x_2-x_1x_1\}
\end{align*}
and
$$
M=M'\cup\{x_nx_1\}.
$$
It is straightforward to verify that $M$ is a QHS on
$X$.

Symbols $J$ and $I$ stand for the ideals in $\K\langle X\rangle$
generated by $M$ and $M'$ respectively. Similarly, $J_0$ and $I_0$
are the ideals in $\K\langle X_0\rangle$ generated by $M_0$ and
$M'_0$ respectively.

\begin{lemma}\label{m2} Let $m\in\N$ and
$u=u_1u_2\dots u_m\in\langle X\rangle_m$ be a singular $($with
respect to $M)$ monomial. Then the set
$\bigl\{j:u_j\in\{x_2,x_n,x_{n-1}\}\bigr\}$ has at most $n-1$
elements.
\end{lemma}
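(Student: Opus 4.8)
The plan is to obtain Lemma~\ref{m2} as an immediate consequence of Corollary~\ref{npu}. That corollary already bounds, for any singular monomial $u=u_1\dots u_m$, the number of positions $j$ at which $u_j$ is \emph{pure} (Definition~\ref{pure}) by $n-1$. So it suffices to prove that each of the three generators $x_2$, $x_{n-1}$, $x_n$ is pure with respect to $M$; once this is known, we have the inclusion $\{j:u_j\in\{x_2,x_n,x_{n-1}\}\}\subseteq\{j:u_j\text{ is pure}\}$, and Corollary~\ref{npu} gives the claimed bound $n-1$ for the left-hand side at no extra cost.

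To check purity I would just run down the defining list of $M$. Recall $M=M'\cup\{x_nx_1\}$, that the lone monomial $x_nx_1$ plays no role in purity, and that, by the right-to-left lexicographical convention, in the normal form $ab-cd$ of a relation the monomial $ab$ is always the larger one, so $d$ is the trailing letter of the \emph{smaller} monomial; purity of $d$ means exactly that no relation with that trailing letter is of the third type $ab-cd$ with $a>b=c>d$. The differences inherited from $M_0'$ involve only the letters of $X_0=\{x_3,\dots,x_{n-2}\}$, which is disjoint from $\{x_2,x_{n-1},x_n\}$, so they are harmless. Among the new relations: $x_nx_j-x_jx_1$ $(2\leq j\leq n-2)$, $x_nx_n-x_{n-1}x_1$ and $x_{n-1}x_2-x_1x_1$ have trailing letter $x_1$; $x_{n-1}x_{n-1}-x_{n-2}x_3$ has trailing letter $x_3$; and the only new relations with trailing letter $x_2$, namely $x_{n-1}x_{j+1}-x_jx_2$ $(2\leq j\leq n-3)$ and $x_nx_{n-1}-x_{n-2}x_2$, satisfy $x_{n-1}\geq x_{j+1}>x_j\geq x_2$ and $x_n\geq x_{n-1}>x_{n-2}\geq x_2$ respectively (this is where the standing hypothesis $n\geq 5$ enters). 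In particular no difference in $M$ has trailing letter $x_{n-1}$ or $x_n$ at all, so $x_{n-1}$ and $x_n$ are vacuously pure, and every difference with trailing letter $x_2$ is of the second type, so $x_2$ is pure. Lemma~\ref{m2} follows.

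There is no genuine obstacle here: the extension construction was arranged precisely so that $x_2$, $x_{n-1}$ and $x_n$ never occur as the trailing letter of the smaller monomial of a type-three relation, which is exactly the condition defining purity. The only point demanding a little care is the bookkeeping in the purity check — in particular one must include the boundary instance $j=2$ of the family $x_{n-1}x_{j+1}-x_jx_2$, that is $x_{n-1}x_3-x_2x_2$, for which the trailing letter is again $x_2$ and $x_{n-1}\geq x_3>x_2\geq x_2$ puts it in the second type; and one must not mistake the occurrence of $x_2$ in the \emph{first} monomial of, say, $x_{n-1}x_2-x_1x_1$ or as the \emph{first} letter of the smaller monomial of $x_nx_2-x_2x_1$ for a trailing-letter occurrence, since purity is a condition on the trailing letter only.
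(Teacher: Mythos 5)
Your proposal is correct and follows exactly the paper's route: the paper's proof of Lemma~\ref{m2} likewise just observes that $x_2$, $x_{n-1}$ and $x_n$ are pure by inspection of the defining list of $M$ and then applies Corollary~\ref{npu}. Your only addition is to spell out the purity check (trailing letters of the smaller monomials are $x_1$, $x_2$, $x_3$ or elements of $X_0$, and the relations ending in $x_2$ are of the second type), which the paper leaves as ``it easily follows.''
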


\begin{proof}From the way we defined $M$ it easily follows that
$x_n$, $x_{n-1}$ and $x_2$ are pure. It remains to apply
Corollary~\ref{npu}.
\end{proof}

\begin{lemma}\label{m33} Let $k$ be a non-negative integer and
$a\in\{x_2,\dots,x_{n-2}\}$. Then $x_n^ka=ax_1^k\,(\bmod\,I)$.
\end{lemma}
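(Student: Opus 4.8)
The plan is to prove the statement by induction on $k$, using only the relations of the form $x_nx_j-x_jx_1$ with $2\leq j\leq n-2$, which belong to $M'$ and hence lie in $I$. Since these relations express $x_nx_j=x_jx_1\,(\bmod\,I)$ and $a\in\{x_2,\dots,x_{n-2}\}$ is precisely such an $x_j$, we immediately get the base of the induction beyond the trivial case: for $k=0$ the claim reads $a=a$, and the relation $x_na-ax_1\in M'\subseteq I$ handles $k=1$.

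For the inductive step I would assume $x_n^ka=ax_1^k\,(\bmod\,I)$ and then compute, using that $f=g\,(\bmod\,I)$ is a two-sided congruence so that it is preserved under left and right multiplication by monomials,
\[
x_n^{k+1}a=x_n\cdot(x_n^ka)=x_n\cdot(ax_1^k)=(x_na)\cdot x_1^k=(ax_1)\cdot x_1^k=ax_1^{k+1}\pmod I,
\]
where the second equality uses the induction hypothesis (multiplied on the left by $x_n$), and the fourth uses the relation $x_na=ax_1\,(\bmod\,I)$ (multiplied on the right by $x_1^k$). This completes the induction and hence the proof.

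There is no real obstacle here: the only point requiring any care is to note that working modulo the two-sided ideal $I$ permits multiplication on either side, so the single family of relations $x_nx_j-x_jx_1$ suffices to ``push'' the powers of $x_n$ past $a$ and convert them into powers of $x_1$. The lemma is essentially a normal-form statement that will be used repeatedly in the subsequent combinatorial analysis of minimal and singular monomials.
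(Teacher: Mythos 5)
Your proof is correct and matches the paper's argument: the paper likewise notes $x_na-ax_1\in M'\subset I$ and then invokes exactly the ``obvious inductive argument'' you spell out. Writing out the induction step explicitly is fine; there is no difference in substance.
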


\begin{proof}From the definition of $M$ it follows that
$x_na-ax_1\in M'\subset I$. That is, $x_na=ax_1\,(\bmod\,I)$.
The required inequality now follows via an obvious inductive argument.
\end{proof}

\begin{lemma}\label{m1} Let $q=\frac{5n-9}{2}$ if $n$ is odd and
$q=\frac{5n-8}{2}$ if $n$ is even. Then there exists a monomial
$u=u_1\dots u_q\in \langle X\rangle_q$ such that $u_q=x_n$ and
$u=x_1^q\,(\bmod\,I)$.
\end{lemma}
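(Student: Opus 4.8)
The plan is to write down the monomial $u$ explicitly, using only the relations in $M'\setminus M'_0$, so that the unknown $M_0$ never enters. Read modulo $I$, those relations give us: Lemma~\ref{m33}, $x_n^ka\equiv ax_1^k$ for every $a\in\{x_2,\dots,x_{n-2}\}$; the ``ladder'' relations $x_{n-1}x_{j+1}\equiv x_jx_2$ $(2\le j\le n-3)$ together with $x_{n-1}x_2\equiv x_1x_1$; and the three ``corner'' relations $x_nx_n\equiv x_{n-1}x_1$, $x_nx_{n-1}\equiv x_{n-2}x_2$, $x_{n-1}x_{n-1}\equiv x_{n-2}x_3$. The first step is to record the identity that manufactures a trailing $x_n$: using in turn the corner relation $x_{n-2}x_2\equiv x_nx_{n-1}$, the $j=n-2$ instance $x_nx_{n-2}\equiv x_{n-2}x_1$ of Lemma~\ref{m33}, and the corner relation $x_{n-1}x_1\equiv x_nx_n$, one gets
$$
x_{n-2}x_2x_1\equiv x_nx_{n-1}x_1\equiv x_nx_nx_n\ \ (\bmod\,I).
$$
This matters because the only degree $2$ relation monomial ending in $x_n$ is $x_nx_n$, and it is paired with $x_{n-1}x_1$; so, essentially, the above identity (or just $x_{n-1}x_1\equiv x_n^2$) is the one device by which a monomial congruent to $x_1^q$ can be forced to end in $x_n$.

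Granting this, I would reduce the lemma to: construct a monomial $w\in\langle X\rangle_{q-3}$ with $w\,x_{n-2}x_2x_1\equiv x_1^q\ (\bmod\,I)$, and then put $u=w\,x_n^3$. The construction of $w$ I would organise around the ladder: the relation $x_1x_1\equiv x_{n-1}x_2$ converts a pair of $x_1$'s into $x_{n-1}x_2$, the relations $x_jx_2\equiv x_{n-1}x_{j+1}$ let one climb from index $j$ to index $j+1$, the corner relation carries the top rung $x_{n-2}x_2$ over to $x_nx_{n-1}$, and Lemma~\ref{m33} is used to push long runs of $x_n$'s through the interior letters $x_3,\dots,x_{n-2}$ without touching them. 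A single ``round trip'' $x_1\to x_{n-2}\to x_1$ along the ladder consumes a fixed number of letters, one needs on the order of $n/2$ such round trips to span the whole index range, and the parity of the index range is what forces the two expressions for $q$: the arguments for $n$ odd and for $n$ even differ only in the length of the last, incomplete round trip. (This can be packaged as an induction on $n$ compatible with the $X_0\subset X$ splitting: the analogue of the present lemma for the smaller system $M_0$ supplies a word in $x_3,\dots,x_{n-2}$ which, mod $I_0\subseteq I$, equals $x_3^{q_{n-4}}$ and ends in $x_{n-2}$, and can be spliced into $w$; the increment $q_n-q_{n-4}=10$ is exactly the cost of the two ladder round trips contributed by the four new generators.)

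The part I expect to be genuinely delicate --- as opposed to mechanical --- is reconciling the ``last letter is $x_n$'' requirement with the degree being \emph{exactly} $q$. A monomial congruent to $x_1^q$ cannot end in $x_n$ until its final three letters have been brought to $x_{n-2}x_2x_1$ (or to the variant $x_{n-2}x_3x_1$), so $u$ cannot be grown letter by letter from its tail; the whole word has to be laid out in advance so that this tail appears while the long prefix still collapses completely to $x_1$'s. In carrying out the verification the quantity to watch is the occurrences of $x_2$: the only relation that removes an $x_2$ is $x_{n-1}x_2\equiv x_1x_1$, so every $x_2$ created along the way must be kept immediately to the right of an $x_{n-1}$ until it is re-absorbed --- an $x_2$ that gets separated from its $x_{n-1}$ (for instance stranded after an $x_1$) can never be eliminated. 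Controlling these $x_2$'s, and simultaneously making the degree land on $q$ rather than something larger (easy) or smaller (which the rigidity above forbids), is the heart of the argument and pins down both the exact value of $q$ and the odd/even distinction.
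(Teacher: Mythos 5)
There is a genuine gap: your write-up assembles the right toolkit (the ladder relations $x_jx_2\equiv x_{n-1}x_{j+1}$, the corner relations, Lemma~\ref{m33} to push powers of $x_n$ through interior letters, and the correct device $x_{n-2}x_2x_1\equiv x_nx_{n-1}x_1\equiv x_n^3$ for manufacturing a trailing $x_n$, which is the paper's identity (\ref{f22}) at $k=3$ read backwards), and the reduction to finding $w\in\langle X\rangle_{q-3}$ with $wx_{n-2}x_2x_1\equiv x_1^q\,(\bmod\,I)$ is sound. But the lemma's entire content is the existence of such a word of degree \emph{exactly} $q$, and you never construct $w$: you describe ``round trips'' along the ladder, say one needs ``on the order of $n/2$'' of them, and then explicitly defer the reconciliation of the degree count and the odd/even split as ``the heart of the argument.'' That heart is precisely what the paper supplies, via the explicit congruences (\ref{f11})--(\ref{f66}) (in particular the length-preserving reduction $x_1^k\equiv x_{n-1}x_{n-2}x_1^{k-5}x_2x_1x_2$, iterated $k$ times, followed by (\ref{f44})--(\ref{f66})), together with the two explicit terminal words in the cases $n$ odd and $n$ even. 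Without an explicit word, or at least a verified recursion whose length bookkeeping lands on $q=\frac{5n-9}{2}$ resp.\ $\frac{5n-8}{2}$, the statement is not proved.

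Your parenthetical fallback --- an induction on $n$ in which ``the analogue of the present lemma for the smaller system $M_0$'' supplies a word in $x_3,\dots,x_{n-2}$ congruent to $x_3^{q_{n-4}}$ modulo $I_0$ and ending in $x_{n-2}$ --- does not work as stated, because $M_0$ is an \emph{arbitrary} QHS on $X_0$, not one arising from the extension construction, so no such analogue is available for it (and the lemma for $M$ is proved in the paper using only the relations of $M'\setminus M_0'$, precisely so that nothing about $M_0$ is needed). If you want to salvage your plan, drop the appeal to $M_0$ and instead verify a concrete self-contained reduction scheme such as the paper's: prove (\ref{f33}), check that each application preserves length while consuming five letters of the $x_1$-run, and exhibit the terminal word for each parity; the parity distinction then falls out of whether the leftover run after $k$ applications has length $3$ or $1$, rather than being ``pinned down'' by an informal counting of $x_2$'s.
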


\begin{proof} Since $x_{n-1}x_2-x_1x_1\in M'\subset I$, we see that for every
$k\geq 2$, $x_1^k=x_{n-1}x_2x_1^{k-2}\,(\bmod\,I)$. By Lemma~\ref{m33}, $x_2x_1^{k-2}=x_n^{k-2}x_2\,(\bmod\,I)$. Hence
\begin{equation}\label{f11}
\text{$x_1^k=x_{n-1}x_n^{k-2}x_2\,(\bmod\,I)$ for every $k\geq 2$.}
\end{equation}
Since $x_{n}x_n-x_{n-1}x_1\in M'$ and $x_{n}x_{n-1}-x_{n-2}x_2\in
M'$, we get $x_{n}^3=x_nx_{n-1}x_1=x_{n-2}x_2x_1\,(\bmod\,I)$. Hence
$x_{n}^k=x_{n}^{k-3}x_{n-2}x_2x_1\,(\bmod\,I)$.  By Lemma~\ref{m33},
$x_{n}^{k-3}x_{n-2}=x_{n-2}x_1^{k-3}\,(\bmod\,I)$. Combining the
last two equalities, we obtain
\begin{equation}\label{f22}
\text{$x_{n}^k=x_{n-2}x_1^{k-3}x_2x_1\,(\bmod\,I)$ for every $k\geq 3$.}
\end{equation}
For $k\geq 5$, we can apply (\ref{f22}) to $x_n^{k-2}$ in (\ref{f11}), which gives
\begin{equation}\label{f33}
\text{$x_1^k=x_{n-1}x_{n-2}x_1^{k-5}x_2x_1x_2\,(\bmod\,I)$ for every $k\geq 5$.}
\end{equation}
Since $x_{n}x_2-x_{2}x_1\in M'$ and $x_{n-1}x_{3}-x_{2}x_2\in M'$,
we have
\begin{equation}\label{f44}
\text{$x_2x_1x_2=x_{n}x_{2}x_2=x_{n}x_{n-1}x_{3}\,(\bmod\,I)$.}
\end{equation}
Next, for $2\leq j\leq n-4$, we have $x_{n-1}x_{j+1}-x_{j}x_{2}\in
M'$, $x_{n}x_{j+1}-x_{j+1}x_{1}\in M'$ and
$x_{n-1}x_{j+2}-x_{j+1}x_{2}\in M'$. Hence,
\begin{equation}\label{f55}
\text{$x_jx_2x_1x_2=x_{n-1}x_{j+1}x_1x_2=x_{n-1}x_{n}x_{j+1}x_{2}=
x_{n-1}x_{n}x_{n-1}x_{j+2}\,(\bmod\,I)$ for $2\leq j\leq n-4$.}
\end{equation}
Finally, since $x_{n-1}x_{n-2}-x_{n-3}x_{2}\in M'$,
$x_{n}x_{n-2}-x_{n-2}x_{1}\in M'$, $x_{n}x_{n-1}-x_{n-2}x_{2}\in M'$
and $x_nx_n-x_{n-1}x_1\in M'$, we get
\begin{equation}\label{f66}
\begin{array}{l}
x_{n-3}x_2x_1x_2x_1=x_{n-1}x_{n-2}x_1x_2x_1=x_{n-1}x_{n}x_{n-2}x_{2}x_1=
\\
\qquad\qquad\qquad\,\,=x_{n-1}x_{n}x_{n}x_{n-1}x_1=x_{n-1}x_nx_nx_nx_n\,(\bmod\,I).
\end{array}
\end{equation}

{\bf Case 1}: $n$ is odd. In this case $n=2k+3$ and $q=5k+3$ for some $k\in\N$. Using (\ref{f33}) consecutively $k$ times, we see that
$$
x_1^q=(x_{n-1}x_{n-2})^kx_1x_1(x_2x_1x_2)^{k}x_1\,(\bmod\,I).
$$
Since $x_1x_1=x_{n-1}x_{2}\,(\bmod\,I)$, we obtain
$$
x_1^q=(x_{n-1}x_{n-2})^kx_{n-1}x_{2}(x_2x_1x_2)^{k}x_1\,(\bmod\,I).
$$
Applying (\ref{f55}) consecutively $k-1$ times, we have
$$
x_1^q=(x_{n-1}x_{n-2})^kx_{n-1}(x_{n-1}x_{n}x_{n-1})^{k-1}
x_{n-3}x_2x_1x_2x_1\,(\bmod\,I).
$$
According to (\ref{f66}),
$$
x_1^q=(x_{n-1}x_{n-2})^kx_{n-1}(x_{n-1}x_{n}x_{n-1})^{k-1}
x_{n-1}x_{n}x_{n}x_{n}x_n\,(\bmod\,I),
$$
which completes the proof in the case of odd $n$.

{\bf Case 2}: $n$ is even. In this case $n=2k+2$ and $q=5k+1$ for some $k\geq 2$. Using (\ref{f33}) consecutively $k$ times, we see that
$$
x_1^q=(x_{n-1}x_{n-2})^k(x_2x_1x_2)^{k}x_1\,(\bmod\,I).
$$
By (\ref{f44}),
$$
x_1^q=(x_{n-1}x_{n-2})^kx_{n}x_{n-1}x_{3}(x_2x_1x_2)^{k-1}x_1\,(\bmod\,I).
$$
Applying (\ref{f55}) consecutively $k-2$ times, we have
$$
x_1^q=(x_{n-1}x_{n-2})^kx_{n}x_{n-1}(x_{n-1}x_{n}x_{n-1})^{k-2}
x_{n-3}x_2x_1x_2x_1\,(\bmod\,I).
$$
According to (\ref{f66}),
$$
x_1^q=(x_{n-1}x_{n-2})^kx_{n}x_{n-1}(x_{n-1}x_{n}x_{n-1})^{k-2}
x_{n-1}x_{n}x_{n}x_{n}x_n\,(\bmod\,I),
$$
which completes the proof in the case of even $n$.
\end{proof}

\begin{lemma}\label{m30} Let $k$ be a non-negative integer. If $ab\in
M_0'$, then $ax_1^kb\in I$. If $ab-cd\in M_0'$, then
$ax_1^kb=cx_1^kd\,(\bmod\,I)$.
\end{lemma}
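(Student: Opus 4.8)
The plan is to deduce this directly from Lemma~\ref{m33}, which provides a ``conjugation'' identity allowing us to slide a block of $x_1$'s past a letter of $X_0$ by converting it into a block of $x_n$'s placed on the other side. Note first that, by construction, $M_0'\subseteq M'$, hence $M_0'\subseteq I$; in particular the case $k=0$ of the lemma is immediate. For general $k$, let $a,b,c,d\in X_0$. Since $X_0=\{x_3,\dots,x_{n-2}\}\subseteq\{x_2,\dots,x_{n-2}\}$, Lemma~\ref{m33} applies to $a$ and to $c$, giving $x_n^k a=ax_1^k\,(\bmod\,I)$ and $x_n^k c=cx_1^k\,(\bmod\,I)$. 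Multiplying the first of these on the right by $b$ and the second on the right by $d$ (which is legitimate because $I$ is a two-sided ideal), we obtain
$$
ax_1^k b=x_n^k ab\,(\bmod\,I)\qquad\text{and}\qquad cx_1^k d=x_n^k cd\,(\bmod\,I).
$$

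Now both assertions follow quickly. If $ab\in M_0'$, then $ab\in I$ (as $M_0'\subseteq M'\subseteq I$), so $x_n^k(ab)\in I$ because $I$ is an ideal, and then the first displayed congruence gives $ax_1^k b\in I$. If $ab-cd\in M_0'$, then likewise $ab-cd\in I$, so $x_n^k ab=x_n^k cd\,(\bmod\,I)$; combining this with the two displayed congruences we get $ax_1^k b=x_n^k ab=x_n^k cd=cx_1^k d\,(\bmod\,I)$, as desired.

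I do not expect a genuine obstacle here: the statement is essentially a corollary of Lemma~\ref{m33} combined with the inclusion $M_0'\subseteq M'$, which says that every relation of the ``inner'' system $M_0$ is already a relation of the ``outer'' system $M$. The only point meriting a moment's care is that Lemma~\ref{m33} is stated only for letters in $\{x_2,\dots,x_{n-2}\}$, so one should confirm that the first letters $a$ (and $c$) occurring in the relations of $M_0'$ always lie in this range; but this is automatic, since $M_0$ is a QHS on $X_0=\{x_3,\dots,x_{n-2}\}$ and hence every letter appearing in $M_0'$ belongs to $X_0\subseteq\{x_2,\dots,x_{n-2}\}$.
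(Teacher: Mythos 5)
Your proof is correct and follows essentially the same route as the paper: both deduce the claim from Lemma~\ref{m33} (sliding $x_1^k$ past the left-hand letter as $x_n^k$) together with the observation that the relations of $M_0'$ already lie in $I$, so that $ax_1^kb=x_n^kab$ and $cx_1^kd=x_n^kcd\,(\bmod\,I)$ settle both cases. Your extra check that the letters of $M_0'$ lie in $\{x_2,\dots,x_{n-2}\}$ is the right point of care and is indeed automatic since $M_0$ is a QHS on $X_0$.
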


\begin{proof} If $ab\in M_0'$, we have $ab\in I_0\subset I$ and therefore
Lemma~\ref{m33} implies that $ax_1^kb=x_n^kab\in I$.

If $ab-cd\in M_0'$, we can use Lemma~\ref{m33} to see that
$ax_1^kb=x_n^kab=x_n^kcd=cx_1^kd\,(\bmod I)$.
\end{proof}

Since $I_0$ is the ideal in $\K\langle X_0\rangle$ generated by
$M_0'$, the above lemma immediately implies the following result.

\begin{corollary}\label{m3} Let $u,v\in \langle X_0\rangle_m$ and
$u=v\,(\bmod\,I_0)$. Then $\widetilde u=\widetilde v\,(\bmod\,I)$
for every non-negative integers $k_1,\dots,k_{m-1}$, where
$u=u_1\dots u_m$, $v=v_1\dots,v_m$, $\widetilde
u=u_1x_1^{k_1}u_2x_1^{k_2}\dots x_1^{k_{m-1}}u_{m}$ and $\widetilde
v=v_1x_1^{k_1}v_2x_1^{k_2}\dots x_1^{k_{m-1}}v_{m}$.
\end{corollary}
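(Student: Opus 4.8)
The plan is to prove Corollary~\ref{m3} by a double induction, reducing it to repeated applications of Lemma~\ref{m30}. The key observation is that since $I_0$ is the ideal generated by $M_0'$, the congruence $u = v \pmod{I_0}$ means that $u-v$ is a finite sum of terms of the form $p\,g\,q$ with $g\in M_0'$ and $p,q\in\K\langle X_0\rangle$; equivalently, $v$ is obtained from $u$ by a finite chain of elementary moves, each of which replaces one occurrence of a submonomial $ab$ (with $ab\in M_0'$ or $ab-cd\in M_0'$ for some $cd$) inside a monomial by $cd$ (or by $0$, if $g=ab\in M_0'$). So first I would set up the statement to prove: for any single elementary move transforming a monomial $y = y_1\cdots y_m\in\langle X_0\rangle_m$ into $y'$ (or into $0$), and for any choice of exponents $k_1,\dots,k_{m-1}\geq 0$, the inserted monomials $\widetilde y = y_1 x_1^{k_1} y_2 \cdots x_1^{k_{m-1}} y_m$ and $\widetilde{y'}$ satisfy $\widetilde y = \widetilde{y'}\pmod I$ (with $\widetilde{y'}$ read as $0$ in the vanishing case).

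The heart of this single-move claim is exactly Lemma~\ref{m30}: if the move acts on the consecutive pair $y_j y_{j+1}$ — so $y_j y_{j+1}\in M_0'$ or $y_j y_{j+1} - cd\in M_0'$ — then inside $\widetilde y$ that pair appears as the submonomial $y_j y_{j+1}$ only when $k_j = 0$. The subtlety is that in $\widetilde y$ the two letters $y_j$ and $y_{j+1}$ are separated by the block $x_1^{k_j}$, so I cannot directly quote $y_j y_{j+1}\in M_0'$; instead I invoke Lemma~\ref{m30} with $a = y_j$, $b = y_{j+1}$, $k = k_j$, which says precisely that $y_j x_1^{k_j} y_{j+1}\in I$ (in the monomial case) or $y_j x_1^{k_j} y_{j+1} = c x_1^{k_j} d\pmod I$ (in the binomial case). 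Multiplying this congruence on the left by the prefix $y_1 x_1^{k_1}\cdots y_j$'s predecessor block $y_1 x_1^{k_1} \cdots x_1^{k_{j-1}}$ and on the right by the suffix block $x_1^{k_{j+1}} y_{j+2} \cdots x_1^{k_{m-1}} y_m$ — both of which are elements of $\K\langle X\rangle$ — and using that $I$ is a two-sided ideal, I get $\widetilde y \equiv \widetilde{y'} \pmod I$, where $\widetilde{y'}$ is formed from $y' = y_1\cdots y_{j-1}\,c\,d\,y_{j+2}\cdots y_m$ using the \emph{same} exponents $k_1,\dots,k_{m-1}$ (with the block $x_1^{k_j}$ now sitting between $c$ and $d$). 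In the monomial case the whole product lands in $I$, matching $\widetilde{y'}=0$.

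The remaining work is purely bookkeeping. Given $u = v\pmod{I_0}$, write $v$ as the end of a chain $u = y^{(0)}, y^{(1)}, \dots, y^{(r)} = v$ of monomials in $\langle X_0\rangle_m$ each obtained from the previous by one elementary move. (If some intermediate step sends a monomial to $0$, then $u\in I_0$, hence by the monomial case $\widetilde u\in I$; and then $v\in I_0$ as well, so $\widetilde v\in I$ too, and the congruence $\widetilde u = \widetilde v\pmod I$ is trivial — but in fact this degenerate situation cannot arise when both $u,v$ are nonzero monomials congruent mod $I_0$, since congruence classes containing monomials contain no monomials lying in $I_0$.) Applying the single-move claim with the fixed exponent vector $(k_1,\dots,k_{m-1})$ at each step and chaining the congruences in $I$ (transitivity of congruence mod an ideal), I conclude $\widetilde u = \widetilde v\pmod I$. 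I expect the main obstacle — such as it is — to be purely notational: keeping straight that the \emph{same} exponents $k_1,\dots,k_{m-1}$ are carried along unchanged through every move (each move only rewrites a pair of adjacent letters, never touching or reindexing the intervening $x_1$-blocks, since $a\geq b>c\geq d$ or $a>b=c>d$ forces $c,d$ to be single letters of $X_0$), together with a clean statement of the ``$v'$ from $v$ via one move'' relation. There is no real analytic or combinatorial difficulty here; the corollary is a direct amplification of Lemma~\ref{m30} from one relation to the ideal it generates.
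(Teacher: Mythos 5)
Your argument is correct, and it rests on the same engine as the paper's: Lemma~\ref{m30} handles one relation of $M_0'$ with an $x_1$-block inserted, and the rest is propagation over the ideal $I_0$. The difference is in how you propagate. The paper treats Corollary~\ref{m3} as immediate for the following reason: since $u-v$ is homogeneous of degree $m$ and lies in $I_0$, it is a finite sum of terms $p\,g\,q$ with $g\in M_0'$ and $p,q$ monomials in $\langle X_0\rangle$; the insertion map (fixing the exponent vector $k_1,\dots,k_{m-1}$) sends each such term into $I$ by Lemma~\ref{m30} together with two-sidedness of $I$, hence $\widetilde u-\widetilde v\in I$ by linearity, with no case analysis at all. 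You instead pass to the rewriting picture: ``$u=v\,(\bmod\,I_0)$ iff $u$ and $v$ are linked by a chain of elementary moves or both lie in $I_0$.'' That equivalence is true for semigroup relations, but it is a genuinely stronger statement than what the corollary needs, it is exactly the nontrivial direction (your ``equivalently''), and you do not prove it; it would itself require an argument via the contracted semigroup algebra of the quotient semigroup. It also forces you into the degenerate-case bookkeeping, and your parenthetical there is slightly off: the corollary does not assume $u,v\notin I_0$, so the case $u,v\in I_0$ can perfectly well arise --- your chain-plus-monomial-case treatment does cover it, but the closing claim that it ``cannot arise'' for congruent monomials is misleading. In short: your proof is valid modulo the standard-but-unjustified chain characterization, while the termwise linear argument on the spanning elements $p\,g\,q$ gets the same conclusion more cheaply and is what makes the corollary ``immediate'' in the paper.
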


\begin{lemma}\label{m4} Let $m\in\N$, $m\geq 3$ and $u_1,\dots,u_{m}\in
\langle X_0\rangle_{m}$ be such that the monomial $u_1\dots u_{m-2}$
is $M_0$-tame. Let also $k_1,\dots,k_{m-1}$ be non-negative integers
and $w=u_1x_1^{k_1}u_2x_1^{k_2}\dots x_1^{k_{m-1}}u_{m}$. Then $w$ is
not $M$-singular.
\end{lemma}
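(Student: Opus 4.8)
The plan is to argue by contradiction: assume $w=u_1x_1^{k_1}u_2x_1^{k_2}\cdots x_1^{k_{m-1}}u_m$ is $M$-singular, and derive that $u_1\cdots u_{m-2}$ must in fact be $M_0$-singular, contradicting the hypothesis that it is $M_0$-tame. First I would record that $w$ is a monomial of degree $N=m+\sum k_i$ over $X$, and that by Lemma~\ref{subm} every submonomial of a singular monomial is singular; in particular the $M$-minimality of $w$ forces strong constraints on which letters can occur. Since $w$ is built from letters in $X_0\cup\{x_1\}$ only, and the $x_1$'s are isolated powers inserted between the $X_0$-letters $u_1,\dots,u_m$, the natural move is to understand how $M$-reductions act on such a word. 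The key transfer tool is Corollary~\ref{m3}: if $u_1\cdots u_m = v_1\cdots v_m \pmod{I_0}$ in $\K\langle X_0\rangle$, then the interleaved words $u_1x_1^{k_1}\cdots u_m$ and $v_1x_1^{k_1}\cdots v_m$ are congruent mod $I$. Combined with Lemma~\ref{m33} (which lets us slide $x_1$'s past any letter of $X_0$, converting them to $x_n$'s on the other side), this says that the $I$-coset of $w$ is controlled by the $I_0$-coset of $u_1\cdots u_m$ together with bookkeeping of the $x_1$-powers.

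The heart of the argument is then the following: since $u_1\cdots u_{m-2}$ is $M_0$-tame, there is $j\le m-2$ and a word $\widetilde v = v_1\cdots v_{m-2}\in\langle X_0\rangle_{m-2}$ with $u_1\cdots u_{m-2}=\widetilde v\pmod{I_0}$, $v_j=x_{n-2}$ (the maximum of $X_0$), and $v_l=u_l$ for $j<l\le m-2$. By Corollary~\ref{m3}, replacing the prefix $u_1x_1^{k_1}\cdots x_1^{k_{m-3}}u_{m-2}$ of $w$ by $v_1x_1^{k_1}\cdots x_1^{k_{m-3}}u_{m-2}$ gives a word congruent to $w$ mod $I$ in which the letter $x_{n-2}$ appears in slot corresponding to $v_j$, with everything to its right (inside the first $m-2$ blocks) unchanged. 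Now I need to promote $x_{n-2}$ to $x_n$ — because it is $b=\max X$ that witnesses $M$-tameness, not $x_{n-2}$. For this I would use the defining relations of $M$ that connect the "top block" $\{x_{n-1},x_n\}$ to $X_0$: relations such as $x_n x_j - x_j x_1$ and $x_{n-1}x_{j+1}-x_jx_2$, together with $x_{n-1}x_{n-1}-x_{n-2}x_3$ and $x_n x_{n-1}-x_{n-2}x_2$. The point is that an occurrence of $x_{n-2}$ followed (after some $x_1$'s, which we can absorb via Lemma~\ref{m33}) by an appropriate letter can be rewritten, modulo $I$, to a word containing $x_n$ at a position with everything to its right preserved — exactly the tameness condition for $w$. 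Because $v_j = x_{n-2}$ is the maximum of $X_0$ and the block to its right begins with $v_{j+1}=u_{j+1}\in X_0$ (or, if $j=m-2$, the next letter of $w$ after the $x_1^{k_{m-2}}$ is $u_{m-1}$, using here that $m\ge 3$ so there is indeed such a letter), we are in a position to apply the relevant $x_{n-2}x_{\ell}\leftrightarrow x_{n-1}x_{\ell'}$ or $x_{n-2}x_{\ell}\leftrightarrow x_{n}x_{\ell'}$ type identity and push a genuine $x_n$ into $w$; iterating the congruences from Lemma~\ref{m33} and Corollary~\ref{m3} keeps everything to the right of the new $x_n$ fixed.

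The main obstacle I expect is precisely this last step: converting the $X_0$-maximum $x_{n-2}$ into the global maximum $x_n$ while not disturbing the suffix. One must choose the right defining relation depending on what letter follows $x_{n-2}$ (and handle the edge case where $x_{n-2}$ is immediately followed only by $x_1$-powers and then the block $u_{m-1}$, or sits at position $j=m-2$), and one must check that the rewriting really does leave the tail of $w$ untouched — the inserted $x_1$'s are harmless since Lemma~\ref{m33} lets them commute to the far right as $x_n$'s, but one has to confirm this does not accidentally shorten the word or alter a letter to the right of the intended $x_n$-slot. A clean way to organize this is: first use tameness of $u_1\cdots u_{m-2}$ over $X_0$ and Corollary~\ref{m3} to reduce to the case $u_j = x_{n-2}$ for some $j\le m-2$ with $u_l$ ($l>j$) fixed; then do a short explicit case analysis on $u_{j+1}$ (or on $m$ if $j=m-2$) using the block relations of $M$, invoking Lemma~\ref{m33} to clear the interleaved $x_1^{k_j}$, to exhibit $w'\equiv w\pmod I$ with an $x_n$ in position $>j$ and identical suffix. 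That $w'$ witnesses $M$-tameness of $w$, contradicting singularity; hence no $M$-singular $w$ of the stated form exists.
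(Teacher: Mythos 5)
Your setup coincides with the paper's: use the $M_0$-tameness of $u_1\cdots u_{m-2}$ together with Corollary~\ref{m3} to replace the prefix of $w$ by a congruent one in which the letter $x_{n-2}$ occupies some slot $j\leq m-2$ with the tail unchanged, and then use Lemma~\ref{m33} to slide the interleaved $x_1^{k_j}$ past $x_{n-2}$ (turning it into $x_n^{k_j}$ on the left), so that $x_{n-2}$ sits immediately to the left of $u_{j+1}$. Up to this point the two arguments are identical.

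However, the step you yourself single out as ``the main obstacle'' --- exhibiting $w'\equiv w\,(\bmod\,I)$ with an $x_n$ in some position and the suffix of $w$ literally preserved --- is exactly the part you do not carry out, and as an unconditional goal it is not what happens. In the paper's case analysis only one branch ends in tameness: when $u_{j+1}=x_3$ one rewrites $x_{n-2}x_3\equiv x_{n-1}x_{n-1}$ and then, \emph{provided} $k_{j+1}>0$, applies $x_nx_n-x_{n-1}x_1\in M'$ (a relation your sketch does not mention) to produce $\dots x_{n-1}x_nx_nx_1^{k_{j+1}-1}u_{j+2}\dots$, which has an $x_n$ with the tail of $w$ intact. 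In every other branch no such $x_n$ is produced; instead one contradicts the minimality of $w$ (which singularity supplies): if $u_{j+1}>x_3$, the unique element of $M_0'$ whose support contains $x_{n-2}u_{j+1}$ either puts $w$ into $I$ or rewrites it into a strictly smaller monomial in the right-to-left lexicographic order; if $u_{j+1}=x_3$ and $k_{j+1}=0$, the relation $x_{n-1}u_{j+2}-cd$ with $d<u_{j+2}$ again yields a strictly smaller congruent monomial. So the missing content is precisely this dichotomy --- tameness arises in exactly one configuration (and only because of the chain $x_{n-2}x_3\to x_{n-1}x_{n-1}$, $x_{n-1}x_1\to x_nx_n$, which needs an $x_1$ to be present), while all other configurations must be closed by an ordering/minimality argument rather than by promoting $x_{n-2}$ to $x_n$. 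Without identifying this mechanism, the plan as stated does not close, since a congruent word containing $x_n$ with identical suffix is simply not available in general.
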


\begin{proof} Since $u_1\dots u_{m-2}$
is $M_0$-tame, there are $v_1\dots v_{m-2}\in
\langle X_0\rangle_{m-2}$ and $j\in\{1,\dots,m-2\}$ such that
\begin{equation}\label{fuu}
\text{$u_1\dots u_{m-2}=v_1\dots v_{m-2}\,(\bmod\,I_0)$,
$v_j=x_{n-2}$ and $u_l=v_l$ for $j<l\leq m-2$.}
\end{equation}
According to (\ref{fuu}) and Corollary~\ref{m3},
\begin{equation*}
w=v_1x_1^{k_1}\dots v_{j-1}x_1^{k_{j-1}}x_{n-2}x_1^{k_{j}}
u_{j+1}x_1^{k_{j+1}}\dots u_{m-1}x_1^{k_{m-1}}u_{m}\,(\bmod\,I).
\end{equation*}
Applying Lemma~\ref{m33} to $x_{n-2}x_1^{k_{j}}$ and using the above
display, we obtain
\begin{equation}\label{fuu1}
\text{$w=yx_{n-2}u_{j+1}x_1^{k_{j+1}}\dots u_{m-1}x_1^{k_{m-1}}u_{m}\,(\bmod\,I)$, where $y=v_1x_1^{k_1}\dots v_{j-1}x_1^{k_{j-1}}x_n^{k_{j}}$.}
\end{equation}

Since $x_{n-2}=\max X_0$, the monomial $x_{n-2}u_{j+1}$ features in
exactly one $g\in M_0$. If $u_{j+1}>x_3$, $g\in M'_0$ and there are
two possibilities: either $g=x_{n-2}u_{j+1}$ or
$g=x_{n-2}u_{j+1}-cd$ with $d<u_{j+1}$. In the first case
(\ref{fuu1}) implies that $w\in I$ and therefore $w$ is non-minimal
and therefore non-singular. In the second case (\ref{fuu1}) gives
$$
w=ycdx_1^{k_{j+1}}\dots u_{m-1}x_1^{k_{m-1}}u_{m}\,(\bmod\,I).
$$
The monomial in the right-hand side of the above display is less than $w$ and again we see that $w$ is non-minimal and therefore non-singular.

It remains to consider the case  $u_{j+1}=x_3$. Since $x_{n-1}x_{n-1}-x_{n-2}x_{3}\in M'$, we have $x_{n-2}x_{3}=x_{n-1}x_{n-1}\,(\bmod\,I)$. Thus in this case (\ref{fuu1}) implies that
\begin{equation}\label{fuu2}
w=yx_{n-1}x_{n-1}x_1^{k_{j+1}}\dots u_{m-1}x_1^{k_{m-1}}u_{m}\,(\bmod\,I).
\end{equation}
Again, we have two possibilities: either $k_{j+1}>0$ or $k_{j+1}=0$. First, assume that $k_{j+1}=0$. In this case (\ref{fuu2}) reads
$$
w=yx_{n-1}x_{n-1}u_{j+2}x_1^{k_{j+2}}\dots u_{m-1}x_1^{k_{m-1}}u_{m}\,(\bmod\,I).
$$
Since $u_{j+2}\in X_0$, from the definition of $M$ it follows that there is
$g\in M'$ such that $g=x_{n-1}u_{j+2}-cd$ with $d<u_{j+1}$. Then $x_{n-1}u_{j+2}=cd\,(\bmod\,I)$ and according to the above display
$$
w=yx_{n-1}cdx_1^{k_{j+2}}\dots u_{m-1}x_1^{k_{m-1}}u_{m}\,(\bmod\,I).
$$
The monomial in the right-hand side of the above display is less than $w$ and therefore $w$ is non-minimal and hence non-singular. Finally, assume that $k_{j+1}>0$. Since
$x_{n}x_{n}-x_{n-1}x_{1}\in M'$, we have $x_{n-1}x_{1}=x_{n}x_{n}\,(\bmod\,I)$ and (\ref{fuu2}) implies that
$$
w=yx_{n-1}x_{n}x_{n}x_1^{k_{j+1}-1}u_{j+3}x_1^{k_{j+3}}\dots u_{m-1}x_1^{k_{m-1}}u_{m}\,(\bmod\,I).
$$
The above display shows that $w$ is tame if it is minimal. Thus in any case, $w$ is non-singular.
\end{proof}

The main result of this section is the following theorem.

\begin{theorem}\label{ext} If $M_0$ is regular, then $M$ is
also regular.
\end{theorem}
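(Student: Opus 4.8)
The plan is to show that $M$ has no singular monomial of some large degree $N$, by deriving a contradiction from the existence of an $M$-singular monomial $w=w_1\cdots w_N$. Fix $m_0\in\N$ with no $M_0$-singular monomials of degree $m_0$; by Lemma~\ref{subm} applied to $M_0$ there are then none of any degree $\ge m_0$. By Lemma~\ref{m2}, at most $n-1$ letters of $w$ lie in $\{x_2,x_{n-1},x_n\}$, so removing those positions splits $w$ into at most $n$ maximal runs whose letters belong to $X\setminus\{x_2,x_{n-1},x_n\}=\{x_1\}\cup X_0$; each such run is a submonomial of $w$, hence $M$-singular by Lemma~\ref{subm}, and for $N$ large one of them, $v$, has arbitrarily large length. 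Next, Lemma~\ref{m1} produces a degree-$q$ monomial ending in $x_n=\max X$ that is congruent to $x_1^q$ modulo $I$, so $x_1^q$ is non-minimal or tame, hence not $M$-singular; by Lemma~\ref{subm} no run $x_1^\ell$ with $\ell\ge q$ is $M$-singular, so every maximal run of $x_1$'s inside $v$ has length $<q$. Writing $v=x_1^{a_0}u_1x_1^{a_1}u_2\cdots u_sx_1^{a_s}$ with $u_i\in X_0$ and $0\le a_i<q$, we get $|v|\le s+(s+1)(q-1)$, so taking $|v|$ large forces $s\ge m_0+2$; deleting the boundary $x_1$-blocks leaves an $M$-singular, in particular $M$-minimal, submonomial $v'=u_1x_1^{a_1}u_2\cdots x_1^{a_{s-1}}u_s$.

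Now put $W=u_1u_2\cdots u_s\in\langle X_0\rangle_s$ and let $\Phi\colon\K\langle X_0\rangle_s\to\K\langle X\rangle_s$ be the linear map sending a monomial $y_1\cdots y_s$ to $y_1x_1^{a_1}y_2\cdots x_1^{a_{s-1}}y_s$, so that $\Phi(W)=v'$. The degree-$s$ part of $I_0$ is spanned by the monomials that contain a monomial of $M_0'$ as a submonomial together with the differences $z-z'$ of monomials related by one binomial relation of $M_0'$; by the first assertion of Lemma~\ref{m30} and by Corollary~\ref{m3}, $\Phi$ carries this span into $I$. Hence $W\in I_0$ would give $v'\in I$, contradicting minimality of $v'$, so $W\notin I_0$, and by Remark~\ref{rem2} there is a unique $M_0$-minimal $\mu=\mu_1\cdots\mu_s$ with $\mu=W\,(\bmod\,I_0)$. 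If $\mu\neq W$, then $\mu<W$ in the right-to-left lexicographical order, say $\mu_j<u_j$ and $\mu_l=u_l$ for $l>j$; Corollary~\ref{m3} gives $\Phi(\mu)=v'\,(\bmod\,I)$, while $\Phi(\mu)<v'$, since $\Phi(\mu)$ and $v'$ share all their $x_1$-blocks and agree in every $X_0$-slot beyond the $j$-th, so the rightmost position at which they differ carries $\mu_j<u_j$ — again contradicting minimality of $v'$. Therefore $\mu=W$, i.e. $W$ is $M_0$-minimal.

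Then $u_1\cdots u_{s-2}$ is an $M_0$-minimal submonomial of degree $s-2\ge m_0$, hence not $M_0$-singular, hence $M_0$-tame; so Lemma~\ref{m4}, applied with $m=s$ to $v'=u_1x_1^{a_1}u_2\cdots x_1^{a_{s-1}}u_s$, shows that $v'$ is not $M$-singular — the desired contradiction. Thus $M$ has no singular monomial of degree $N$ once $N$ is large enough (a bound of order $nq(m_0+3)$ works), so $M$ is regular.

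I expect the main obstacle to be the pull-back to $X_0$ in the second paragraph: checking that $W\in I_0$ forces $v'\in I$, and that replacing $W$ by its $M_0$-minimal representative strictly lowers the inserted monomial in the right-to-left order. Both hinge on setting up the insertion map $\Phi$ so that it is compatible with the congruences of Corollary~\ref{m3} and Lemma~\ref{m30}, i.e. on transporting the word combinatorics of $M_0$ through the extra relations defining $M$; the quantitative part of the first paragraph — forcing $s\ge m_0+2$ — relies crucially on the bounded-run consequence of Lemma~\ref{m1}.
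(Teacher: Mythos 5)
Your proof is correct and follows essentially the same route as the paper's: Lemma~\ref{m2} to isolate a long singular submonomial over $\{x_1\}\cup X_0$, Lemma~\ref{m1} to cap the runs of $x_1$'s, and Lemma~\ref{m4} to produce the contradiction. The only difference is that you verify the $M_0$-tameness hypothesis of Lemma~\ref{m4} explicitly (pulling the monomial back to $\langle X_0\rangle$ via the insertion map, using Lemma~\ref{m30}, Corollary~\ref{m3} and Remark~\ref{rem2} to show it is $M_0$-minimal), a point the paper's shorter argument leaves implicit; this is added detail rather than a different approach.
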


\begin{proof} Assume that $M_0$ is regular. Then there is $m\in\N$ such that
there are no $M_0$-singular monomials in $\langle X_0\rangle_{m-2}$.
Let $q=\frac{5n-9}{2}$ if $n$ is odd and $q=\frac{5n-8}{2}$ if $n$
is even and set $r=nmq$.

It suffices to show that there are no $M$-singular monomials in
$\langle X\rangle_r$. Assume that there exists an $M$-singular $u\in
\langle X\rangle_r$. By Lemma~\ref{m2}, the set
$\bigl\{j:u_j\in\{x_2,x_n,x_{n-1}\}\bigr\}$ has at most $n-1$
elements. It follows that there is a submonomial $v$ of $u$ of
degree $mq$ such that $v$ does not contain $x_2$, $x_{n-1}$ or
$x_n$. By Lemma~\ref{subm}, $v$ is singular. If at least $m$ entries
of $v$ belong to $X_0$, Lemma~\ref{m4} implies that $v$ is
non-singular and we arrive to a contradiction. Thus the degree $mq$
monomial $v$ contains at most $m-1$ entries from $X_0$. Since all
other entries of $v$ are $x_1$, it follows that $x_1^q$ is a
submonomial of $v$. Hence $x_1^q$ is singular, which contradicts
Lemma~\ref{m1}.
\end{proof}

It is straightforward to see that the cardinalities of $M$ and $M_0$
are related by the equality $|M|=|M_0|+2n-3$. Hence
Theorem~\ref{ext} immediately implies the following result.

\begin{corollary}\label{ext1} Suppose that $n\geq 5$ and that there is a $k$-element regular QHS on an $(n-4)$-element set. Then there is a $(k+2n-3)$-element
regular QHS on an $n$-element set.
\end{corollary}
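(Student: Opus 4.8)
The plan is to deduce the corollary directly from Theorem~\ref{ext} together with a bookkeeping count of the defining relations. First I would note that both the property of being a QHS and the property of being regular depend only on the order type of the generating set, not on the labels of the generators. Hence, given an arbitrary $k$-element regular QHS on an arbitrary $(n-4)$-element totally ordered set, I may relabel the ground set so that it becomes exactly $X_0=\{x_3,\dots,x_{n-2}\}$ with the ordering inherited from $X=\{x_1<x_2<\dots<x_n\}$; call the resulting regular QHS $M_0$. Here we use $n\geq 5$, so that $X_0$ is non-empty and the construction of this section applies.

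With $M_0$ in hand, I would apply the construction of this section verbatim, forming $M'$ and $M=M'\cup\{x_nx_1\}$ as displayed above. The section already verified that $M$ is a QHS on the $n$-element set $X$, so no new argument is needed there. Since $M_0$ is regular, Theorem~\ref{ext} yields that $M$ is regular. Thus $M$ is a regular QHS on an $n$-element set, and it only remains to confirm that $|M|=k+2n-3$.

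For the count: $M_0'=M_0\setminus\{x_{n-2}x_3\}$ has $k-1$ elements. Passing from $M_0'$ to $M'$ adjoins three pairwise disjoint families whose supports all involve at least one of the new generators $x_1,x_2,x_{n-1},x_n$, hence none of them lies in $M_0'$: the $n-3$ relations $x_nx_j-x_jx_1$ with $2\leq j\leq n-2$; the $n-4$ relations $x_{n-1}x_{j+1}-x_jx_2$ with $2\leq j\leq n-3$; and the $4$ relations $x_nx_n-x_{n-1}x_1$, $x_nx_{n-1}-x_{n-2}x_2$, $x_{n-1}x_{n-1}-x_{n-2}x_3$, $x_{n-1}x_2-x_1x_1$. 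Therefore $|M'|=(k-1)+(n-3)+(n-4)+4=k+2n-4$, and adjoining the single monomial $x_nx_1$ gives $|M|=k+2n-3$, as desired. I do not expect any genuine obstacle: all the difficulty is absorbed into Theorem~\ref{ext}, which we are free to assume, and the only points demanding care are the harmless relabeling in the first step and the arithmetic identity $(n-3)+(n-4)+4=2n-3$ in the last.
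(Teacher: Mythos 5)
Your proposal is correct and follows the paper's own route exactly: the paper obtains the corollary by observing that $|M|=|M_0|+2n-3$ for the extension construction and then invoking Theorem~\ref{ext}, which is precisely your argument, with the relabeling remark and the explicit count $(k-1)+(n-3)+(n-4)+4+1=k+2n-3$ merely spelling out what the paper calls straightforward.
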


Applying Corollary~\ref{ext1} several times, we, after an exercise
on summing up an arithmetic series, obtain the following result.

\begin{proposition}\label{ext2} Let $m,j\in\N$ and there is a $k$-element
regular QHS on an $m$-element set. Then there is a $(k+2jm+4j^2+j)$-element
regular QHS on an $(m+4j)$-element set.
\end{proposition}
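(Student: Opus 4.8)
The plan is a straightforward induction on $j$, using Corollary~\ref{ext1} as the single inductive step and keeping careful track of the arithmetic. For the base case $j=1$: since $m\geq 1$ we have $m+4\geq 5$, so Corollary~\ref{ext1} applied with $n=m+4$ turns the given $k$-element regular QHS on an $m$-element set into a regular QHS on an $(m+4)$-element set of cardinality $k+2(m+4)-3=k+2m+5$, and one checks $2m+5=2\cdot 1\cdot m+4\cdot 1^2+1$, which is the claimed increment for $j=1$.

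For the inductive step, assume the statement holds for some $j\in\N$: starting from a $k$-element regular QHS on an $m$-element set we obtain a regular QHS on an $(m+4j)$-element set with $k+2jm+4j^2+j$ elements. Now apply Corollary~\ref{ext1} once more, with $n=m+4(j+1)=m+4j+4$; the hypothesis $n\geq 5$ holds because $m\geq 1$. This produces a regular QHS on an $(m+4j+4)=(m+4(j+1))$-element set whose cardinality is
$$
\bigl(k+2jm+4j^2+j\bigr)+2\bigl(m+4j+4\bigr)-3
= k+2(j+1)m+4j^2+8j+j+8-3-...
$$
and collecting terms one verifies this equals $k+2(j+1)m+4(j+1)^2+(j+1)$: indeed $2jm+2m=2(j+1)m$, and $4j^2+j+8j+8-3=4j^2+9j+5=4(j+1)^2+(j+1)$. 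This completes the induction.

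There is essentially no obstacle here beyond bookkeeping: the only point requiring a word of care is that the hypothesis $n\geq 5$ of Corollary~\ref{ext1} must be verified at \emph{every} stage, which follows at once from $m\geq 1$ since the set sizes produced along the way are $m+4,m+8,\dots,m+4j$, each at least $5$. Alternatively, one may avoid an explicit induction and instead apply Corollary~\ref{ext1} consecutively $j$ times, noting that the total number of relations added is $\sum_{i=1}^{j}\bigl(2(m+4i)-3\bigr)=j(2m-3)+8\cdot\frac{j(j+1)}{2}=2jm+4j^2+j$ by summing the arithmetic progression; this is the ``exercise on summing up an arithmetic series'' alluded to above.
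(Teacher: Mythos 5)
Your proof is correct and follows essentially the same route as the paper, which simply applies Corollary~\ref{ext1} consecutively $j$ times and sums the arithmetic progression $\sum_{i=1}^{j}\bigl(2(m+4i)-3\bigr)=2jm+4j^2+j$; your explicit induction (with the check that $n=m+4i\geq 5$ at every stage, guaranteed by $m\geq 1$) is just a more detailed write-up of that argument. The only blemish is the stray ``$-\dots$'' in your displayed computation, which should be cleaned up, but the arithmetic you carry out in the surrounding text is right.
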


\section{Proof of Theorem~\ref{semi2}}

We start with the following 4 specific examples of QHS. For $1\leq m\leq 4$, we set $X_m=\{x_1,\dots,x_m\}$ ordered by $x_1<x_2<x_3<x_4$. Now we define a QHS $M_m$ on the $m$-element totally ordered set $X_m$:
$$
\begin{array}{l}
M_1=\{x_1x_1\},\ \ \ M_2=\{x_2x_2-x_1x_1,x_2x_1\},\ \ \ M_3=\{x_3x_3-x_2x_1, x_3x_2-x_1x_1, x_3x_1, x_2x_2\},
\\
M_4=\{x_4x_4-x_3x_1,
x_4x_3-x_2x_1, x_4x_2-x_1x_1, x_3x_3-x_2x_2, x_3x_2, x_4x_1\}.
\end{array}
$$

\begin{lemma}\label{small}
For each $m\in\{1,2,3,4\}$, the QHS $M_m$ is regular.
\end{lemma}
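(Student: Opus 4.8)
The plan is to verify regularity directly for each of the four small algebras $R_{M_m}$ by exhibiting an explicit degree $m_0$ at which no singular monomials exist. By Lemma~\ref{regqhs}, it actually suffices to prove the stronger-looking fact that each $R_{M_m}$ is finite dimensional together with regularity; but since regularity is what Theorem~\ref{ext} needs as input, I will work with the definition of singular monomial directly. For $m=1$ the set $X_1$ has one element and $M_1=\{x_1x_1\}$, so $I_{M_1}=\{0\}$ (it is generated by $M_1'=M_1\setminus\{x_1x_1\}=\varnothing$), $b=a=x_1$, and every monomial $x_1^k$ is trivially tame, witnessed by $j=k$; hence there are no singular monomials of any degree and $M_1$ is regular. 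For $m=2$ we have $a=x_1$, $b=x_2$, $M_2'=\{x_2x_2-x_1x_1\}$, so modulo $I_{M_2}$ the only rewriting is $x_1x_1\leftrightarrow x_2x_2$; I would observe that any monomial containing an $x_2$ to the left of its final letter, or ending in $x_2$, is tame, and a monomial of the form $x_1^k$ with $k\geq 2$ is tame because $x_1x_1=x_2x_2\,(\bmod\,I_{M_2})$ puts an $x_2$ in the last position. The only candidate singular monomial is $x_1$ itself (degree $1$), so there are no singular monomials of degree $2$, and $M_2$ is regular.

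For $m=3$ and $m=4$ the verification is a finite computation with the rewriting system determined by $M_m'$; I would organize it as follows. First list $M_m'$: for $m=3$, $M_3'=\{x_3x_3-x_2x_1,\ x_3x_2-x_1x_1,\ x_2x_2\}$ (dropping $x_3x_1$), and for $m=4$, $M_4'=\{x_4x_4-x_3x_1,\ x_4x_3-x_2x_1,\ x_4x_2-x_1x_1,\ x_3x_3-x_2x_2,\ x_3x_2\}$ (dropping $x_4x_1$). In each case $b=\max X_m$ is $x_3$ or $x_4$ respectively. The key step is to use the relations to push the maximal letter $b$ rightward: every rule of the form $b u_{j+1}-cd\in M_m'$ (and there are such rules for each letter $u_{j+1}<b$, because the QHS covers all pairs $u_{j+1}b$ with $u_{j+1}\le b$) lets one replace an occurrence of $b$ that is not in the last position by a pair $cd$ with $d<u_{j+1}$, strictly decreasing the monomial in the right-to-left lexicographic order. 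Running this to termination shows every minimal monomial either already ends in $b$ (hence is tame) or contains no $b$ at all after rewriting; so a singular monomial of degree $m_0$ must, after reducing to its minimal representative, be a word in the strictly smaller alphabet $\{x_1,\dots,x_{m-1}\}$ with all the induced relations, and one reduces the problem to bounding monomials avoiding $b$. Concretely, I would compute the set of minimal monomials of each small degree by hand, check that for $m=3$ there are none of degree, say, $4$, and for $m=4$ none of degree $6$ or so, which establishes regularity; the precise cut-off degree is immaterial for the application.

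The main obstacle is purely bookkeeping: one must be careful that when a rewriting $bu_{j+1}\to cd$ is applied the letter that moves into the previous position, namely $c$, can itself equal $b$ (e.g.\ $x_4x_3\to x_2x_1$ has $c=x_2\ne b$, but $x_3x_3\to x_2x_2$ in $M_4'$ has no $b$, and $x_4x_4\to x_3x_1$ introduces the second-largest letter), so the rewriting may not terminate after a single pass and one must argue termination via the right-to-left lexicographic order, which strictly decreases at every step and is a well-order on $\langle X\rangle_{m_0}$. Once termination is granted, the finiteness of $\langle X_m\rangle_{m_0}$ for the relevant small $m$ and $m_0$ makes the remaining check a finite enumeration; alternatively one can simply exhibit, for each $m\in\{3,4\}$ and each monomial $u$ of the chosen degree $m_0$, an explicit chain of rewritings ending in a word with $b$ in its last position, which is the definition of tameness. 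This is the content of the lemma, and I expect the author's proof to carry out exactly such an explicit finite verification (possibly citing Wisliceny's criterion or a direct computation).
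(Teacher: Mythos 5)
Your cases $m=1$ and $m=2$ are fine, but for $m=3$ and $m=4$ the proposal does not actually prove anything: it defers the whole content to an unspecified finite computation (``check that for $m=3$ there are none of degree, say, $4$, and for $m=4$ none of degree $6$ or so'') that is never carried out, and the degree is left vague. Worse, the mechanism you describe for certifying tameness is off. A minimal monomial containing $b=\max X_m$ \emph{anywhere} is tame (take $v=u$), not only one ending in $b$; and, more importantly, a minimal monomial containing no $b$ at all can still be tame, because the witness $v$ in Definition~\ref{abab} need not be obtained by your lex-decreasing rewriting --- it is typically lex-\emph{larger} than $u$ and is produced by rewriting ``upward'' so as to create a $b$ while keeping the tail fixed (for $M_3$, e.g., $x_1^3=x_3x_2x_1=x_3x_3x_3\,(\bmod\,I_{M_3})$, where the tame witness comes from the reverse substitution $x_2x_1\to x_3x_3$). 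So your reduction ``a singular monomial must avoid $b$, hence live in the smaller alphabet'' leaves exactly the hard part --- showing that all sufficiently long $b$-free minimal monomials admit such an upward rewriting --- unaddressed, and the termination argument for downward rewriting does not help with it.

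The paper avoids all of this with a two-line uniform argument using machinery it has already set up: for each $M_m$ every binomial relation $ab-cd$ satisfies $a\geq b>c\geq d$, so \emph{every} generator is pure in the sense of Definition~\ref{pure}; Corollary~\ref{npu} then says an $M_m$-singular monomial has at most $m-1$ pure letters, hence degree at most $m-1$, so there are no singular monomials of degree $m$ and $M_m$ is regular. If you want to salvage your route, you must either carry out the finite verification explicitly at a stated degree (including the upward rewritings that produce the tame witnesses), or replace it with the purity argument above.
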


\begin{proof} It is straightforward to see that for each $M_m$, each
element of $X_m$ is pure. By Corollary~\ref{npu} an $M_m$-singular
monomial can not have the degree more than $m-1$. Hence there are no
$M_m$-singular monomials of degree $m$. By definition, each $M_m$ is
regular.
\end{proof}

Let $j$  be an arbitrary non-negative integer. By Lemma~\ref{small}, there is a regular $1$-element QHS on a 1-element set. By Proposition~\ref{ext2}, there is a $(4j^2+3j+1)$-element regular QHS on a $(4j+1)$-element set.
By Lemma~\ref{small}, there is a regular $2$-element QHS on a
2-element set. By Proposition~\ref{ext2}, there is a $(4j^2+5j+2)$-element regular QHS on a $(4j+2)$-element set. By Lemma~\ref{small}, there is a regular $4$-element QHS on a 3-element set. By Proposition~\ref{ext2}, there is a $(4j^2+7j+4)$-element regular QHS on a $(4j+3)$-element set. Finally, by Lemma~\ref{small}, there is a regular $6$-element QHS on a 4-element set. By Proposition~\ref{ext2}, there is a $(4j^2+9j+6)$-element regular QHS on a $(4j+4)$-element set.

For $n\in\N$, we define $\delta_n=4j^2+3j+1$ if $n=4j+1$,
$\delta_n=4j^2+5j+2$ if $n=4j+2$, $\delta_n=4j^2+7j+4$ if $n=4j+3$
and $\delta_n=4j^2+9j+6$ if $n=4j+4$. The above observations mean
that for every $n\in\N$, there is a $\delta_n$-element regular QHS
on an $n$-element set. According to Lemma~\ref{regqhs}, this means
that for every $n\in\N$, there is a quadratic semigroup algebra
$R_n$ with $n$ generators given by $\delta_n$ relations such that
$R_n$ is finite dimensional. On the other hand, a routine check
shows that $\delta_n$ is precisely the smallest integer greater than
$\frac{n^2+n}{4}$. This completes the proof of Theorem~\ref{semi2}.

\bigskip

We conclude with the following remarks.

\begin{remark}\label{pu} We have just seen that for $n\leq 4$, there
is a $\delta_n$-element QHS on an $n$-element set $X$ for which
every $c\in X$ is pure. It turns out that $n=4$ is the last $n$ for
which this phenomenon occurs. More precisely, one can easily see
that a QHS on an $n$-element set $X$, for which every $c\in X$ is
pure, must contain at least $\frac{n^2+2n}{4}$ elements.
\end{remark}

\begin{remark}\label{ver} Vershik \cite{vers} conjectured that for
every $n\geq 3$, there is a finite dimensional quadratic algebra
with $n$ generators given by $\frac{n^2-n}{2}$ relations. This
conjecture is proved in \cite{na} (see also \cite{cana} for the
proof in the case $3\leq n\leq 7$). Note that for
$n\geq 4$, this statement could also  be obtained as a consequence of Theorem~\ref{semi2}.
\end{remark}

\begin{remark}\label{ttt} It would be interesting to get a
tight enough estimate of the order of nilpotency of quadratic
semigroup algebras which we construct while proving Theorem~\ref{semi2}.
An estimate, that follows from the proof is way higher than the
actual value for small $n$.
\end{remark}

\begin{remark}\label{lastt} Let $R$ be a quadratic algebra with $n$ generators given by  $d\leq n^2$ relations. For $k\in\N$, $R_k$ stands for the degree $k$ homogeneous component of $R$. By the Golod--Shafarevich estimate $\dim R_3\geq\min\{0,n^3-2dn\}$. Anick \cite{ani1,ani2} proved that this estimate for $\dim R_3$ is actually attained for every $n$ and $d$. An analysis of his proof shows that it is also attained if we restrict ourselves to semigroup algebras.

On the other hand (see \cite{na}), there is an $R$ with $n=d=3$ for which $R_5=\{0\}$, while Theorem~\ref{semi1} implies that $\dim R_5>0$ if $R$ is a semigroup algebra with $n=d=3$. So, for every pair $(n,d)$, the minimal dimensions of the third component for general quadratic algebras and for semigroup quadratic algebras coincide, while the same statement fails for the dimension of the fifth component. The situation with the fourth component remains unclear. Its clarification is a part of the following much more general question.
\end{remark}

\begin{question}\label{zzz} What is the minimal dimension of  $R_k$, where $R$ is a semigroup quadratic algebra with $n$ generators given by $d\leq n^2$ relations?
\end{question}

\bigskip {\bf Acknowledgements.} \ We are grateful to the referee for a comment, which prompted us to improve the presentation of the paper.

\small\rm

\normalsize

\vskip1truecm

\scshape

\noindent  Natalia Iyudu\ \ {\rm and}\ \  Stanislav Shkarin \\

\noindent Max-Planck-Institut f\"ur Mathematik

\noindent 7 Vivatsgasse, 53111 Bonn

\noindent Germany

\noindent E-mail address: \qquad {\tt iyudu@mpim-bonn.mpg.de}\\

\noindent Queens's University Belfast

\noindent Department of Pure Mathematics

\noindent University road, Belfast, BT7 1NN, UK

\noindent E-mail address: \qquad {\tt s.shkarin@qub.ac.uk}


\begin{thebibliography}{99}

\itemsep=-2pt

\bibitem{ani1}Anick, D.: Generic algebras and CW complexes. In: Algebraic
topology and algebraic $K$-theory (Princeton, N.J., 1983). pp.\ 247--321,
Ann.\ of Math.\ Stud. \bf113\rm, Princeton Univ. Press, Princeton, NJ
(1987)

\bibitem{ani2}Anick, D.: Noncommutative graded algebras and their Hilbert
series. Journal of Algebra \bf78\rm, 120--140 (1982)

\bibitem{cana}Cameron, P., Iyudu N.: Graphs of relations and Hilbert
series. Journ.\ Symbolic Comput.\ \bf42\rm, 1066--1078 (2007)

\bibitem{eti}Etingof, P., Ginzburg, V.: Noncommutative complete intersections and
matrix integrals. Pure Appl. Math. Q. {\bf 3}, 107-151 (2007)

\bibitem{gosh}Golod E., Shafarevich I.: On the class field tower
[Russian]. Izv.\ Akad.\ Nauk SSSR Ser.\ Mat.\ \bf28\rm, 261--272 (1964)

\bibitem{goshp}Golod E.: On nil algebras and residually finite $p$-groups.
Izv.\ Akad.\ Nauk SSSR Ser.\ Mat.\ \bf28\rm, 273--276 (1964)

\bibitem{na}Iyudu, N., Shkarin S.: Minimal Hilbert series for quadratic
algebras and Anick's conjecture. Proc.\ Roy.\ Soc.\ Edinb.\ {\bf141A}, 1--21 (2011)

\bibitem{J}Jespers E., Okninski J.: Noetherian semigroup algebras.
Springer, Dordrecht, 2007

\bibitem{AgL}Lenagan, T.~H., Smoktunowicz A.: An infinite dimensional
affine nil algebra with finite Gelfand-Kirillov dimension.
Journ.\ Amer.\ Math.\ Soc.\ {\bf 20}, 989-1001 (2007)


\bibitem{Sch}Newman M.~F., Schneider C., Shalev A.: The entropy of graded algebras.
Journal of Algebra {\bf223}, 85--100 (2000)

\bibitem{popo}Polishchuk A., Positselski L.: Quadratic algebras. University Lecture
Series {\bf37} AMS, Providence, RI (2005)

\bibitem{Ag}Smoktunowicz A.: Some results in noncommutative ring theory. International
Congress of Mathematicians II, Eur.\ Math.\ Soc., Zurich, 259--269 (2006)

\bibitem{ufn}Ufnarovskii V.: Combinatorial and asymptotic methods in
algebra [Russian]. Current problems in mathematics. Fundamental
directions {\bf57}, 5--177, Moscow, 1990

\bibitem{vers}Vershik A.: Algebras with quadratic relations.
Selecta Math. Soviet. {\bf11}, 293--315 (1992)

\bibitem{Vinb}Vinberg E.: On the theorem concerning the infinite dimensionality of an
associative algebra [Russian]. Izv.\ Akad.\ Nauk SSSR Ser.\ Mat.\ \bf29\rm, 209--214  (1965)

\bibitem{wis}Wisliceny I.: Konstruktion nilpotenter associativer
Algebren mit wenig Relationen. Math. Nachr. {\bf147}, 75--82  (1990)

\bibitem{z1}Zelmanov E.: Some open problems in the theory of infinite dimensional
algebras. J. Korean Math. Soc. {\bf 44}, 1185-1195 (2007)

\bibitem{z2}Zelmanov E.: Infinite algebras and pro-$p$ groups. In:
Infinite groups: geometric, combinatorial and dynamical aspects.  Progress in Mathematics {\bf 248}, Birkh\"auser, Basel, 403-413 (2005)

\end{thebibliography}
\end{document}